%% file: main.tex
\documentclass{birkjour}
 \newtheorem{thm}{Theorem}[section]
 \newtheorem{cor}[thm]{Corollary}
 \newtheorem{lem}[thm]{Lemma}
 \newtheorem{prop}[thm]{Proposition}
 \theoremstyle{definition}
 \newtheorem{defn}[thm]{Definition}
 \theoremstyle{remark}
 \newtheorem{rem}[thm]{Remark}
 \newtheorem{ex}{Example}
 \numberwithin{equation}{section}

 \usepackage{cite}
\usepackage{hyperref}
\hypersetup{
	colorlinks,
	linkcolor = {blue},
	citecolor = {blue},
	filecolor = {blue},
	urlcolor = {blue} 
}

\definecolor{r}{rgb}{255, 0, 0}

\begin{document}
%
%
%
%---------------------------------------------------------------------------
%Insert here the title, affiliations and abstract:
%

\title[Inscribed Sphere and Apollonius Centers]
 {On the Inscribed Sphere and Concurrent Lines through the Centers of Apollonius Spheres in $\mathbb{R}^n$}

%----------Author 1
\author[Miłosz Płatek]{Miłosz Płatek}

\address{%
Independent Researcher\\
Kraków, Poland}

\email{milosz@platek.org}

\subjclass{51B25; 51B15; 51M15; 51M04}

\keywords{Apollonius problem, Lie sphere geometry, Euclidean $n$--space, tangent spheres, concurrency of lines through centers, Soddy line}

\date{January 1, 2004}

\begin{abstract}
The Apollonius problem asks for a sphere tangent to $n+1$ given spheres or hyperplanes in $\mathbb{R}^n$. This problem has been widely studied for an isolated configuration of $n+1$ spheres. In this paper, we study relations among the solutions of the Apollonius problem arising from a common family of spheres within the framework of Lie sphere geometry. More precisely, we consider a configuration of $n+2$ spheres in $\mathbb{R}^n$ and the solutions of the Apollonius problem corresponding to all its subsets of size $n+1$. The first main result concerns lines passing through the centers of pairs of solutions to the Apollonius problem. We prove that all these lines intersect at a single point $P_X$. We then introduce a two--step construction of further Apollonius spheres and show that the lines determined by their centers also pass through $P_X$. This yields numerous applications in two and three dimensions and, at the same time, automatically extends them to $\mathbb{R}^n$. The second main result is an $n$--dimensional generalization of K. Morita's three-dimensional theorem on the inscribed sphere in a configuration of mutually tangent spheres. We show that Morita's theorem is a special case of our result for an arbitrary configuration of $n+2$ spheres in $\mathbb{R}^n$, not necessarily mutually tangent. Moreover, we connect this result with the preceding ones by proving that the center of the corresponding inscribed sphere is again the point $P_X$.
\end{abstract}

%%% ----------------------------------------------------------------------
\maketitle
%%% ----------------------------------------------------------------------
%\tableofcontents
\section{Introduction}

The Apollonius problem asks for a sphere tangent to $n+1$ given spheres or hyperplanes in $\mathbb{R}^n$. Various aspects of this problem have been investigated.

Configurations of mutually tangent circles were studied by Soddy, a Nobel Prize laureate in Chemistry, and famously presented in his poem \textit{The Kiss Precise}, published in Nature~\cite{kissprecise}. He considered such configurations in two dimensions and described relations between the radii of mutually tangent circles and spheres, now known as Descartes' theorem, first stated by Descartes in 1643 in letters to Princess Elisabeth of the Palatinate~\cite{history}.

The configuration of $n+1$ spheres in $n$ dimensions has been studied extensively in the literature~\cite{2001,2004,2015}. In particular, the number of real solutions to the Apollonius problem and the properties of mutually tangent spheres have been investigated, among other aspects~\cite{tobefilled,paluszny}. The planar and three-dimensional cases, as well as various degenerations of the configuration, have also been studied~\cite{nature, ijcdm, degeneration, sodyline, general}.

These configurations have not been considered solely in a mathematical context. They also arise in biomedical research, in particular in the study of the medial axis of the space surrounding a molecule and in Voronoi diagrams used, for example, in protein structure analysis~\cite{biologia,voronoi}. In the case of Voronoi diagrams, one seeks the centers of spheres that are solutions to the Apollonius problem for many configurations of quadruples of spheres in three-dimensional space~\cite{voronoi}.

When considering the Apollonius problem for many subsets of a given set of spheres, the resulting configurations are not independent, since some of them share common spheres. Consequently, solutions associated with certain subsets may help determine solutions and properties of others.

In this paper, we establish such relations between Apollonius configurations. More precisely, we study a set of $n+2$ spheres in $\mathbb{R}^n$, which determine $n+2$ Apollonius configurations, each of which shares $n$ spheres with every other such configuration.

In Section~\ref{prel}, we introduce the notions needed throughout the paper. Since the proofs rely heavily on Lie sphere geometry, which was also used to study the Apollonius problem in~\cite{2001,2004,2015, knight}, the basic facts from this theory are presented there.

In Section~\ref{sec3}, we present the main results of the paper. We study the above-mentioned configuration of $n+2$ spheres in $\mathbb{R}^n$. First, in Subsection~\ref{sec31}, we examine the relations between the centers of the solutions to the Apollonius problem in this configuration. We formulate and prove Proposition~\ref{prop31}, which shows that the lines through the centers of the Apollonius spheres meet at a single point $P_X$. Next, we present the first main result: a two-step construction of Apollonius spheres, which are spheres tangent to previously obtained Apollonius spheres, and again show that the lines passing through their centers pass through the point $P_X$ (Theorem~\ref{main1}). There are $n\cdot2^n$ such constructions, and thus we obtain exactly that many lines passing through the point $P_X$. The main value of the results presented in this section lies in their simple and broad applicability to the geometric configurations discussed in Section~\ref{sec4}. Subsequently, in Subsection~\ref{sec32}, we prove the second main result of this paper, which generalizes Morita's Theorem~\ref{mort} on the existence of a sphere tangent to given planes~\cite{general}. Morita established this result for the case of pairwise externally tangent spheres in dimensions two and three. We prove this theorem in $n$ dimensions for an arbitrary configuration of spheres, not necessarily pairwise tangent, showing that Morita's Theorem is a special case of the general relationship between solutions to the Apollonius problem. We present this result as Theorem~\ref{main2}, constituting the second main result of the paper. This leads to a general theorem on the existence of a sphere inscribed among hyperspheres tangent to the solutions of the Apollonius problem. We prove that its center is the point $P_X$, thereby connecting this theorem with the preceding results. Although the proofs rely heavily on Lie sphere geometry, the statements of the theorems are formulated in classical Euclidean space and do not employ the language of Lie geometry. As a result, these findings admit numerous applications without the need to invoke the underlying linear algebra.

Finally, in Section~\ref{sec4}, we present a wide range of applications of these results arising from their degenerations. We show applications of these theorems to problems of varying difficulty, ranging from rather intricate ones, such as concurrencies of lines on the Soddy line (Corollary~\ref{cor45soddy}), to elementary ones, such as the existence of the circumcenter or incenter (Example~\ref{check1} and Example~\ref{check2}). The results presented there are formulated in the plane, as this makes them easiest to visualize. Their proofs follow directly from our theorem and immediately yield $n$--dimensional generalizations of these statements, which, to the best of our knowledge, have not previously been stated in this generality for the corollaries presented there. This section also contains new results: we introduce a new theorem about outer Apollonius circles in a triangle (Corollary~\ref{outerapol}), which is a direct corollary of our main result, and introduce the notion of the generalized Soddy line (Definition~\ref{gensody}), which fits naturally into our framework.

\section{Preliminaries}\label{prel}
This section provides the necessary background on Lie sphere geometry used in the paper. Our notation and basic facts follow the standard framework of Lie sphere geometry; see, for example,~\cite{bb,2015}. We present the material from first principles in order to make the paper as self-contained and accessible as possible.
\subsection{Notation}

Let $V=\mathbb{R}^{n+1,2}$ be a real vector space equipped with a nondegenerate symmetric bilinear form $(.|.)$ of signature $(n+1,2)$ defined by
\begin{equation}
(X|Y)=-x_1y_1+\sum_{i=2}^{n+1}x_iy_i+x_{n+2}y_{n+3}+x_{n+3}y_{n+2},
\end{equation}
where $X=(x_1,\,\dots,\,x_{n+3})$ and $Y=(y_1,\,\dots,\,y_{n+3})$.
We denote by $\mathbb{P}(V)$ the projective space of equivalence classes of nonzero vectors in $V$, denoted by $[X]=\mathbb{R}^*X$.

In $\mathbb{R}^n$, directed spheres, hyperplanes and point spheres (spheres with $r=0$) are treated uniformly in Lie sphere geometry, and we call them \textit{cycles} (for $n=2$ this includes circles, lines and points).

\begin{defn}\label{eqcycles}
   For fixed $v,\,\,$$c_1,\,c_2,\,\dots,\,c_n,\,w\in\mathbb{R}$ an equation of an $n$-dimensional cycle $S$ is given by
    \begin{equation}
        S=\{x=(x_1,\,\dots,\,x_n)\in\mathbb{R}^n|\ v(x_1^2+\cdots+x_n^2)-2(c_1x_1+\cdots+c_nx_n)-2w=0\}.
    \end{equation}
    Here $v\neq0$ corresponds to a sphere and $v=0$ corresponds to a hyperplane. It satisfies
    \begin{equation}
        2wv=r^2-(c_1^2+c_2^2+\cdots+c_n^2),
    \end{equation}
    where $r$ is the radius of the cycle.
    The homogeneous center is $c=[c_1:\cdots: c_n: v]$, where $v=0$ represents a point at infinity.
\end{defn}

\begin{rem}
    Note that the radius may be negative. Its sign is determined by the orientation of the cycle. For more on directed circles and their tangency, see~\cite{directed}.
\end{rem}

\begin{defn}[Cycle representation]\label{representation}
For a cycle $S$, we associate the projective class $[X] \in \mathbb{P}(V)$ of a nonzero vector
\begin{equation}
X=(r,\, c_1,\,\dots,\, c_n,\, v,\, w)\in V.
\end{equation}
\end{defn}

\begin{lem}[Cycles]
A projective point $[X]\in \mathbb{P}(V)$ represents a cycle if and only if $(X|X)=0$.
\end{lem}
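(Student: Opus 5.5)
The plan is to compute $(X|X)$ directly for $X=(r,c_1,\dots,c_n,v,w)$ and compare the resulting condition with the defining relation of a cycle in Definition~\ref{eqcycles}. By the definition of the form,
\begin{equation*}
(X|X)=-r^2+c_1^2+\cdots+c_n^2+2vw .
\end{equation*}
So $(X|X)=0$ is equivalent to $2wv=r^2-(c_1^2+\cdots+c_n^2)$, which is exactly the relation required of a cycle. This already settles the forward direction: if $[X]$ represents a cycle $S$ through Definition~\ref{representation}, its coordinates satisfy that relation, hence $(X|X)=0$. Since the form is homogeneous of degree two, the condition does not depend on the representative of $[X]$.

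For the converse, I take a nonzero $X$ with $(X|X)=0$ and recover a cycle from it. I split into cases according to $v$.

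\emph{Case $v\neq 0$.} Rescale so that $v=1$. The equation $|x|^2-2\langle c,x\rangle-2w=0$ can be rewritten as $|x-c|^2=|c|^2+2w$. The null condition gives $|c|^2+2w=r^2$, so this is the sphere centered at $c$ with radius $|r|$. The sign of $r$ records the orientation, and $r=0$ gives a point sphere.

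\emph{Case $v=0$.} The null condition now reads $r^2=|c|^2$. If $c\neq 0$, the equation $-2\langle c,x\rangle-2w=0$ is a hyperplane, with $r=\pm|c|$ fixing its orientation. If $c=0$, then $r=0$, and since $X\neq0$ we must have $w\neq0$. This vector $(0,\dots,0,0,w)$ does not define any set in $\mathbb{R}^n$; it is the usual point at infinity, which the standard Lie framework counts as a (point) cycle.

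I expect this degenerate vector to be the only real obstacle. The statement as written is true only if the point at infinity is accepted as a cycle, consistent with the homogeneous-center convention ($v=0$ meaning a point at infinity). I will state this convention explicitly in the proof. I will also note that the correspondence is well defined on projective classes, with $X$ and $-X$ giving the same cycle with opposite orientation only up to the sign conventions of the paper.
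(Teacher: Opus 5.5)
Your argument is correct. The paper gives no proof of this lemma and simply quotes it as a standard fact. The intended verification is exactly your forward direction: the relation $2wv=r^2-(c_1^2+\cdots+c_n^2)$ built into Definition~\ref{eqcycles} is literally $(X|X)=0$.

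Your converse, via the split $v\neq0$ / $v=0$, goes beyond the paper. Your caveat is genuinely needed for the ``if'' direction. The vector $(0,\dots,0,0,w)$ is the point at infinity of this model: indeed $(X\,|\,(0,\dots,0,0,1))=v$, so it is in oriented contact precisely with the cycles having $v=0$. Yet Definition~\ref{eqcycles} assigns it neither a set nor a homogeneous center.

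One correction to your closing remark: $X$ and $-X$ represent the same \emph{oriented} cycle. The orientation is carried by the scale-invariant ratios $r/v$ for spheres and $c/r$ for hyperplanes, not by the bare sign of $r$. So in your case $v=0$ it is the unit normal $c/r$, not the choice $r=\pm|c|$ alone, that fixes the orientation, since $X$ may be rescaled by a negative number. Reversing orientation is $r\mapsto -r$ with the other coordinates fixed, which is what the paper writes as $-\Omega$.
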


\begin{lem}[Tangency]
Two cycles $[X]$ and $[Y]$ are tangent if and only if $(X|Y)=0$.
\end{lem}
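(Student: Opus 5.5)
We verify the Tangency Lemma by direct computation with the representation of Definition~\ref{representation}, reducing tangency to the classical distance criterion. Take $X=(r,c,v,w)$ and $Y=(r',c',v',w')$ with $c,c'\in\mathbb{R}^n$. The form gives
\begin{equation*}
(X|Y)=-rr'+c\cdot c'+vw'+v'w .
\end{equation*}
Throughout we use the relation $2wv=r^2-|c|^2$ from Definition~\ref{eqcycles}, equivalently $(X|X)=0$ by the Cycles Lemma.

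\textbf{Two genuine spheres.} First assume $v,v'\neq0$ and normalize $v=v'=1$, which is allowed since both are projective classes. Then the center of the sphere $X$ is the point $c$ with radius $r$, and $w=(r^2-|c|^2)/2$; similarly for $Y$. Substituting these values,
\begin{equation*}
(X|Y)=-rr'+c\cdot c'+\tfrac12\left(r'^2-|c'|^2\right)+\tfrac12\left(r^2-|c|^2\right)=\tfrac12\left((r-r')^2-|c-c'|^2\right).
\end{equation*}
Hence $(X|Y)=0$ exactly when $|c-c'|=|r-r'|$. For oriented spheres this is precisely the condition for oriented (directed) tangency. Internal and external tangency are both captured, because the signs of $r$ and $r'$ encode the orientations.

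\textbf{Degenerate cycles.} Next treat the cases involving hyperplanes and point spheres.
\begin{itemize}
\item If $v=0$ and $v'=1$, then $X$ is a hyperplane $c\cdot x=-w$ with unit normal $c/|c|$, and $(X|X)=0$ forces $r=\pm|c|$. In this case $(X|Y)=-rr'+c\cdot c'+w$. This expression vanishes iff the signed distance $(c\cdot c'+w)/|c|$ from the center $c'$ to the hyperplane equals $\pm r'$, with the sign matching the orientation. That is exactly tangency of a sphere and an oriented hyperplane.
\item If $v=v'=0$, then $(X|Y)=-rr'+c\cdot c'$. This vanishes iff the unit normals $c/r$ and $c'/r'$ coincide, meaning the hyperplanes are parallel and equally oriented, which is tangency at infinity.
\item Point spheres ($r=0$) are covered by the first computation: $(X|Y)=0$ becomes $|c-c'|=|r'|$, i.e.\ the point lies on the other cycle.
\end{itemize}

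The main obstacle is interpretive rather than computational. The argument needs a convention defining tangency for oriented hyperplanes and for the point at infinity, and the case $v=0$ requires careful sign bookkeeping. I would fix the convention as "oriented contact, including contact at infinity" and then check each case against the formula above.
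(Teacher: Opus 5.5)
The paper gives no proof of this lemma. It lists it in the preliminaries as a standard fact of Lie sphere geometry and points to the standard references, so your direct computation is a different and more self-contained route than the paper's citation.

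Your argument is correct:
\begin{itemize}
\item With $v=v'=1$ one indeed gets $(X|Y)=\frac12\bigl((r-r')^2-|c-c'|^2\bigr)$. Here $|r-r'|$ equals $\bigl||r|-|r'|\bigr|$ or $|r|+|r'|$ according as the orientations agree or not.
\item For $v=0$, the condition $c\cdot c'+w=rr'$ says exactly that the signed distance from $c'$ to the hyperplane, measured along the oriented unit normal $c/r$, is $r'$.
\item For $v=v'=0$, the condition $(c/r)\cdot(c'/r')=1$ between two unit vectors forces them to be equal.
\end{itemize}

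What your route buys is that it makes explicit the conventions the paper leaves implicit. For $v\neq0$, the Euclidean center and signed radius are $c/v$ and $r/v$; the paper's phrase ``$r$ is the radius'' in Definition~\ref{eqcycles} is literally right only after your normalization $v=1$. A hyperplane is oriented by $c/r$. Equally oriented parallel hyperplanes count as tangent.

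One point of the Lie quadric escapes your case split: $[0:\cdots:0:1]$, with $r=0$, $c=0$, $v=0$, for which $c/r$ is undefined. There $(X|Y)=v'w$, so it is orthogonal exactly to the hyperplanes and to itself. This matches its role as the point at infinity lying on every hyperplane. You should either add this one line or say explicitly that, like Definition~\ref{eqcycles}, you exclude it.

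Also, incidence of a point sphere with a hyperplane comes from your hyperplane bullet with $r'=0$, not from the first computation as you claim.
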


\begin{defn}[Lie quadric]
The Lie quadric is defined as
\begin{equation}
Q=\{[X]\in \mathbb{P}(V)|(X|X)=0\}.
\end{equation}
\end{defn}

\subsection{Lie sphere transformations}
Lie sphere transformations are projective transformations preserving the Lie quadric $Q$. In Definition~\ref{def27}, we introduce one such transformation that will be used in the proof of Theorem~\ref{main1}.

\begin{defn}\label{proorto}
    For projective points $[X_1], \,[X_2], \,\dots,\, [X_k] \in \mathbb{P}(V)$ we define
    \begin{equation}
    \begin{split}
        \langle [X_1],\,[X_2],\,\dots,\, [X_k]\rangle &= \ \mathbb{P}(\operatorname{span}(X_1,\,X_2,\,\dots,\,X_k)),\\
        \langle [X_1],\,[X_2],\,\dots,\, [X_k]\rangle^{\perp} &= \{[X]\in \mathbb{P}(V)| \forall_{1 \leq i\leq k}\colon (X_i|X)=0\}.
    \end{split}
\end{equation}
\end{defn}

\begin{lem}[Decomposition with respect to $P$]\label{decomposition}
Let $P\in V$ satisfy $(P|P)\neq 0$. Then
\begin{equation}
   V = \langle P\rangle^\perp\oplus\langle P\rangle, 
\end{equation}
In particular, every $X\in V$ can be written uniquely as
\begin{equation}\label{division}
        X=X_\perp+\alpha P,
    \qquad
    X_\perp\in \langle P\rangle^\perp,
    \qquad
    \alpha=\frac{(X|P)}{(P|P)}.
\end{equation}
\end{lem}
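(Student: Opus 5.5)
The plan is to use the fact that $(P|P)\neq 0$ to split off the component of $X$ along $P$ by orthogonal projection. We prove the direct sum decomposition in two parts: first that the two subspaces intersect trivially, then that together they span $V$. Uniqueness of the expression \eqref{division} then follows at once from the directness of the sum. Throughout, we read $\langle P\rangle^\perp$ as the linear subspace $\{Y\in V \mid (Y|P)=0\}$ and $\langle P\rangle$ as $\operatorname{span}(P)$, which is the linear counterpart of Definition~\ref{proorto}.

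For the trivial intersection, suppose $Y\in\langle P\rangle\cap\langle P\rangle^\perp$. Then $Y=\lambda P$ for some scalar $\lambda$, and $0=(Y|P)=\lambda(P|P)$. Since $(P|P)\neq 0$, this forces $\lambda=0$, so $Y=0$.

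For the spanning part, take any $X\in V$ and set
\[
\alpha=\frac{(X|P)}{(P|P)},\qquad X_\perp=X-\alpha P.
\]
Bilinearity gives $(X_\perp|P)=(X|P)-\alpha(P|P)=0$, so $X_\perp\in\langle P\rangle^\perp$ and $X=X_\perp+\alpha P$. As a check on dimensions, $\langle P\rangle^\perp$ is the kernel of the nonzero linear functional $(\cdot|P)$, which is nonzero because $(P|P)\neq 0$. Hence $\langle P\rangle^\perp$ has dimension $n+2$, and the two subspaces together have dimension $n+3=\dim V$.

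For uniqueness, suppose $X=X'_\perp+\alpha' P$ is another such decomposition. Pairing both sides with $P$ gives $(X|P)=\alpha'(P|P)$, so $\alpha'=\alpha$, and therefore $X'_\perp=X_\perp$.

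There is no real obstacle in this argument. The only point needing care is where the hypothesis $(P|P)\neq 0$ enters. If $P$ were isotropic, that is, if it represented a cycle, then $P$ would itself lie in $\langle P\rangle^\perp$ and the decomposition would fail.
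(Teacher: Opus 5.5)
Your argument is correct and complete. The trivial-intersection step, the explicit projection $X_\perp = X - \frac{(X|P)}{(P|P)}P$, and uniqueness by pairing with $P$ are exactly what is needed, and the dimension count is a harmless redundancy. The paper states this lemma without proof, treating it as standard background, so your write-up supplies the routine verification it omits; this is the same formula for $\alpha$ that later underlies the reflection $R_P$.
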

\begin{defn}[Reflection in P]\label{def27}
Let $[P]\in \mathbb{P}(V)$ satisfy $(P|P)\neq 0$. The reflection (orthogonal symmetry) with respect to the hyperplane $\langle [P]\rangle ^\perp$ is the projective transformation $R_P\colon (\mathbb{P}(V)\cup V) \to (\mathbb{P}(V)\cup V)$ given by
\begin{equation}
\begin{split}
    R_P(X) &= X-2\frac{(X|P)}{(P|P)}P,\\
    R_P([X]) &= [R_P(X)].
\end{split}
\end{equation}
\end{defn}
\begin{rem}
    Since $R_P([\lambda X]) = [R_P(\lambda X)] = [\lambda R_P(X)] = [ R_P(X)]$, we have that above definition is well defined.
\end{rem}
\begin{lem}[Properties of $R_P$]\label{properties}
For all $[X],\,[Y]\in \mathbb{P}(V)$ the following hold:
\begin{itemize}
\item $R_P$ is an involution, i.e. $R_P\circ R_P=\mathrm{id}$,
\item $(R_P(X)|R_P(Y))=(X|Y)$.
\end{itemize}
\end{lem}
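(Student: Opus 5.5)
The plan is to verify both properties by direct computation from Definition~\ref{def27}, using only bilinearity and symmetry of $(\cdot|\cdot)$ and the assumption $(P|P)\neq 0$. First note that $R_P$ is linear on $V$, since $X\mapsto (X|P)$ is linear, and that $R_P(P)=P-2P=-P$. Moreover, $R_P(X)=X$ whenever $(X|P)=0$. With Lemma~\ref{decomposition} in hand, I would write $X=X_\perp+\alpha P$ and observe that $R_P(X)=X_\perp-\alpha P$. So $R_P$ fixes $\langle P\rangle^\perp$ pointwise and negates $P$, which is exactly the orthogonal reflection described in the definition.

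For the involution property, apply $R_P$ twice to the decomposition. This gives $R_P(R_P(X))=R_P(X_\perp-\alpha P)=X_\perp+\alpha P=X$ by linearity, so $R_P\circ R_P=\mathrm{id}$ on $V$. Passing to classes, $R_P(R_P([X]))=[R_P(R_P(X))]=[X]$, and this is well defined by the preceding remark.

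For preservation of the form, the orthogonal decomposition gives a clean computation. Write $X=X_\perp+\alpha P$ and $Y=Y_\perp+\beta P$. The cross terms vanish because $(X_\perp|P)=(Y_\perp|P)=0$. Hence
\begin{equation*}
(R_P(X)|R_P(Y))=(X_\perp|Y_\perp)+\alpha\beta(P|P)=(X|Y).
\end{equation*}
One could instead expand $\bigl(X-2\tfrac{(X|P)}{(P|P)}P\,\big|\,Y-2\tfrac{(Y|P)}{(P|P)}P\bigr)$ directly. The two middle terms give $-4\tfrac{(X|P)(Y|P)}{(P|P)}$ and the last term gives $+4\tfrac{(X|P)(Y|P)}{(P|P)}$, so they cancel.

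For projective points, the statement should be read at the level of representatives: the identity holds for any chosen vectors $X,Y$ representing $[X],[Y]$. In particular, zero values are preserved. It follows that $R_P$ maps $Q$ to itself and preserves tangency, which is how the lemma will be used later.

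There is no real obstacle in this argument. The only points needing care are that $(P|P)\neq0$ is required for the division in the formula, and that the form identity should be stated for vector representatives, since $(\cdot|\cdot)$ is not defined on projective classes.
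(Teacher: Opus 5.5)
Your proof is correct and complete. Both routes you give verify the two properties: the decomposition via Lemma~\ref{decomposition}, and the direct expansion in which the middle terms $-4\frac{(X|P)(Y|P)}{(P|P)}$ cancel against the last term.

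The paper gives no proof of this lemma; it treats it as the standard fact that reflection in a non-isotropic vector is an isometric involution. Your computation is exactly the routine verification it takes for granted. Your remark that the form identity must be read on vector representatives is a sensible clarification of the paper's statement.
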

\begin{rem}
Since $(R_P(X)|R_P(X))=(X|X)$ for all $X\in V$, we have 
\begin{equation}
    [X]\in Q\iff R_P([X])\in Q.
\end{equation}
Hence $R_P$ induces a projective transformation of $\mathbb{P}(V)$ preserving the Lie quadric $Q$, i.e. a Lie sphere transformation.
\end{rem}

\subsection{Apollonius cycles}

\begin{defn}
    For cycles $[X_1], \,[X_2], \,\dots, \,[X_k] \in \mathbb{P}(V)$, we define their Apollonius cycles by
    \begin{equation}
        Ap([X_1],\,[X_2],\,\dots,\, [X_k]) = \langle [X_1],\,[X_2],\,\dots,\, [X_k]\rangle^{\perp} \cap Q,
    \end{equation}
    where $Q$ is the Lie quadric.
\end{defn}

\section{Main Results}\label{sec3}

In this section, we present the main results of the paper. We divide it into two parts. In Subsection~\ref{sec31}, we present theorems for handling the collinearity of the centers of solutions to the Apollonius problem. In Subsection~\ref{sec32}, we prove a theorem that generalizes Morita's theorem to $n$-dimensions and arbitrary circle configurations. 

\subsection{Concurrency of lines through the centers}\label{sec31}

First, we establish the first-level concurrency of lines through the centers of solutions to the Apollonius problem. In doing so, we introduce the point $P_X$, which plays a key role in all the results presented in this paper.

\begin{prop}\label{prop31}
    We are given $n+2$ cycles in $ \mathbb{R}^n$ in a set $X=\{X_1,\,\dots, \,X_{n+2}\}$. Let, for all $1 \leq i\leq n+2$ and cycles $X\backslash X_i$, there are exactly two distinct Apollonius cycles $A_i$ and $A_i'$ tangent to each of them. Then the lines through the centers of the pairs of cycles $A_1,\,A_1'$; $A_2,\,A_2';\,\dots$; and $A_{n+2}A_{n+2}'$ are concurrent.
\end{prop}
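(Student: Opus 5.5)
Let $X_1,\dots,X_{n+2}$ also denote chosen representative vectors in $V$ of the given cycles; the vectors $A_i,A_i'$ are likewise representatives. I will work entirely in $V=\mathbb{R}^{n+1,2}$ and use the linear structure of the representatives.

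\textbf{Step 1: the line $A_iA_i'$ inside $V$.} For fixed $i$, the $n+1$ cycles $X\setminus X_i$ generically span an $(n+1)$-dimensional subspace $W_i$. Its orthogonal complement $W_i^\perp$ is then $2$-dimensional, since $\dim V=n+3$. The Apollonius cycles are the isotropic lines in $W_i^\perp$. The hypothesis that there are exactly two distinct ones, $A_i$ and $A_i'$, says that $W_i^\perp$ is a nondegenerate plane of signature $(1,1)$ with $W_i^\perp=\operatorname{span}(A_i,A_i')$.

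\textbf{Step 2: a common vector.} The key observation is that $W_i^\perp$ always contains a vector orthogonal to all of $X_1,\dots,X_{n+2}$. If the $X_j$ span an $(n+2)$-dimensional subspace $U$, then $U^\perp=\langle P\rangle$ is a single projective point $[P]$. Since $U\supset W_i$, we get $P\in W_i^\perp$ for every $i$. Hence
\[
P=\lambda_iA_i+\mu_iA_i'\quad\text{for every } i .
\]
The point $[P]$ is the Lie-geometric object that will yield $P_X$. If the $X_j$ are linearly dependent, $U^\perp$ is larger; I expect the hypotheses to exclude this, or else any nonzero common vector can be used.

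\textbf{Step 3: pass to centers.} The map from a representative vector $(r,c_1,\dots,c_n,v,w)$ to its homogeneous center $[c_1:\dots:c_n:v]$ is the restriction of a linear projection $\pi\colon V\to\mathbb{R}^{n+1}$ that drops the coordinates $r$ and $w$. By linearity,
\[
\pi(P)=\lambda_i\pi(A_i)+\mu_i\pi(A_i').
\]
So the projective point $[\pi(P)]$ lies on the projective line through the centers of $A_i$ and $A_i'$, for each $i$. Thus all $n+2$ lines pass through the point $P_X:=[\pi(P)]$, possibly at infinity.

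\textbf{Main obstacle: degeneracies.} This step is not computational. I must check that:
\begin{itemize}
\item $\pi(A_i)$ and $\pi(A_i')$ are independent, so that the two centers are distinct and the line is defined;
\item $\pi(P)\neq0$;
\item the centers are genuinely affine when the $A_i$ are spheres.
\end{itemize}
If $\pi(P)=0$, then $P=(r,0,\dots,0,w)$, and $P$ would lie in the span of $A_i$ and $A_i'$ for every $i$. I would handle this by noting that the statement implicitly assumes the lines are well defined. Likewise, "exactly two distinct Apollonius cycles" is to be read as including the nondegenerate generic position used in Steps 1 and 2, i.e.\ the independence of the $X_j$. If $\pi(P)$ has $v=0$, the concurrency is at a point at infinity, meaning the lines are parallel, consistent with Definition~\ref{eqcycles}.
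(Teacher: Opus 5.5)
Your proposal is correct and is essentially the paper's proof. You take a nonzero $P$ orthogonal to all $X_j$; its existence follows from counting $n+2$ equations in $n+3$ unknowns, which also covers your dependent case. Such a $P$ lies in each two-dimensional space $\langle X\setminus X_i\rangle^\perp=\operatorname{span}(A_i,A_i')$, and the linear center map then puts $[\pi(P)]$ on every line.

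Two of your hedges can be closed rather than assumed. The generic-position caveat is implied by the hypothesis, because a subspace of dimension at least $3$ never contains exactly two isotropic lines. And $\pi(P)\neq 0$ holds automatically: $\pi$ is injective on $\operatorname{span}(A_i,A_i')$ as soon as the centers of $A_i$ and $A_i'$ are distinct.
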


\begin{proof}
    We work in the framework of Lie sphere geometry. By abuse of notation, we assume that all cycles are represented as in Definition~\ref{representation}. Let $[P]=[p_1 : p_2 : \dots : p_{n+3}]\in \mathbb{P}(V)$ be a nonzero projective point such that
\begin{equation}
    [P] \in \langle [X_1],\,[X_2],\,\dots,\, [X_{n+2}]\rangle^{\perp}.
\end{equation}
Since this is a homogeneous system of $n+2$ linear equations in $n+3$ variables, such a $[P]$ exists.

Now define
\begin{equation}
     Y_1 = \langle [X_2],\,[X_3],\,\dots,\, [X_{n+2}]\rangle^{\perp}.
\end{equation}
Then $Y_1$ is a $2$-dimensional subspace of $\mathbb{P}(V)$. Since $[A_1]$ and $[A_1']$ are two distinct projective points of $Y_1$, we have
\begin{equation}
    Y_1=\langle [A_1],\,[A_1']\rangle.
\end{equation}
Therefore,
\begin{equation}\label{inspan}
    [P]\in \langle[A_1],\,[A_1']\rangle.
\end{equation}
Thus, projective points $[P]$, $[A_1]$, and $[A_1']$ are coplanar. Therefore, after projection, centers of $[P]$ and cycles $[A_1]$ and $[A_1']$ are collinear. Arguing analogously for each pair $[A_i]$ and $[A_i']$, we conclude that every line through the centers of $A_i$ and $A_i'$ passes through the center of $[P]$, which completes the proof.
\end{proof}

\begin{rem}
    We denote by $P_{X} = (p_2,\,\dots,\,p_{n+1},\,p_{n+2})$ the point of concurrency of these lines.
\end{rem}

Before proceeding, we state a lemma.

\begin{lem}\label{interchange}
        If $(P|P)\neq 0$, then $R_{P}([A_i])=[A_i']$, where $[A_i]$ and $[A_i']$ are defined as in Proposition~\ref{prop31}.
\end{lem}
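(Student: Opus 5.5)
We would prove Lemma~\ref{interchange} by showing that $R_P$ maps the projective line $Y_i=\langle [A_i],[A_i']\rangle$ into itself and preserves $Q$. It then permutes the two points of $Y_i\cap Q$, and the remaining task is to exclude the identity permutation.

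\emph{Step 1 (invariance of $Y_i$).} Fix $i$. Since $Y_i=\langle [X_j]: j\neq i\rangle^\perp$ and $[P]\in Y_i$ (as in the proof of Proposition~\ref{prop31}), take any $Z$ with $[Z]\in Y_i$. For $j\neq i$, $R_P(Z)=Z-2\frac{(Z|P)}{(P|P)}P$ satisfies $(R_P(Z)|X_j)=(Z|X_j)-2\frac{(Z|P)}{(P|P)}(P|X_j)=0$. Hence $R_P$ restricts to a linear involution of the $2$-dimensional space $\operatorname{span}(A_i,A_i')$. By Lemma~\ref{properties} it preserves the form, so it maps cycles to cycles. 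Therefore $R_P$ maps the set $Y_i\cap Q$ into itself. This set contains $[A_i],[A_i']$, and these are all of its points by the hypothesis that there are exactly two Apollonius cycles.

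\emph{Step 2 (excluding the fixed case).} It remains to show $R_P([A_i])\neq[A_i]$. Suppose $R_P(A_i)=\lambda A_i$. Then $A_i-2\alpha P=\lambda A_i$ with $\alpha=(A_i|P)/(P|P)$. If $\alpha\neq 0$, then $P$ is a multiple of $A_i$, so $(P|P)=0$, a contradiction. Hence $\alpha=0$, i.e. $(A_i|P)=0$.

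Now use the basis $A_i,A_i'$ of the plane and write $P=aA_i+bA_i'$. Since $(A_i|A_i)=(A_i'|A_i')=0$, we get $(P|P)=2ab(A_i|A_i')$ and $(A_i|P)=b(A_i|A_i')$. Here $(A_i|A_i')\neq 0$: otherwise the whole plane would be totally isotropic, every point of $Y_i$ would lie on $Q$, and there would be infinitely many Apollonius cycles, contradicting ``exactly two''. So $(A_i|P)=0$ forces $b=0$, and then $(P|P)=0$, again a contradiction. Thus $R_P$ swaps $[A_i]$ and $[A_i']$. Because $R_P$ is an involution, the symmetric statement $R_P([A_i'])=[A_i]$ also holds.

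The main obstacle is Step 2, specifically the degeneracy issue. We must confirm that $(A_i|A_i')\neq 0$ follows from the hypothesis of exactly two distinct Apollonius cycles. We also need $[P]\in Y_i$ as a point of the plane, which Proposition~\ref{prop31} already established. The same two-dimensional computation in fact gives the explicit formula: reflecting $aA_i+bA_i'$ in $P$ exchanges the coefficients of the two isotropic basis vectors, up to scale.
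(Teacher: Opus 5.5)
Your proof is correct, but it is organized differently from the paper's.

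\textbf{The paper's route.} It works with the orthogonal decomposition $A_i=A_{i_\perp}+\alpha_iP$, $A_i'=A_{i_\perp}'+\alpha_i'P$ of Lemma~\ref{decomposition}. Since $[P]$ lies on the line $\langle[A_i],[A_i']\rangle$ and $(P|P)\neq0$, the two perpendicular components cannot span that line, so $[A_{i_\perp}]=[A_{i_\perp}']$. After rescaling so that $A_{i_\perp}'=A_{i_\perp}$, the isotropy condition $\alpha^2(P|P)=-(A_{i_\perp}|A_{i_\perp})$ forces $\alpha_i'=-\alpha_i$. Then $R_P$ visibly flips the sign of the $P$-component.

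\textbf{Your route.} You argue by invariance: $R_P$ fixes every $X_j$, so it preserves $\langle [X_j]:j\neq i\rangle^\perp$ and $Q$. Hence it permutes the two-point set $Y_i\cap Q$, and you exclude the identity permutation by a computation in the isotropic basis $A_i,A_i'$. This is exactly the mechanism the paper itself uses later, in the proof of Theorem~\ref{main1}, to identify $R_P([B_i])$ with $[B_i']$. So your version makes the lemma and the theorem uniform. The paper's version instead yields the explicit normal form $[A_{i_\perp}\pm\alpha_iP]$, which it reuses there to conclude $[P]\in\langle[B_i],[B_i']\rangle$.

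\textbf{The flagged obstacle is immediate.} Write $P=aA_i+bA_i'$. The hypothesis $(P|P)=2ab(A_i|A_i')\neq0$ already forces $a$, $b$ and $(A_i|A_i')$ to be nonzero, so the total-isotropy argument is unnecessary. Then
\[
R_P(A_i)=A_i-\frac{2b(A_i|A_i')}{2ab(A_i|A_i')}\,(aA_i+bA_i')=-\frac{b}{a}\,A_i',
\]
so your closing remark is by itself a complete proof. It needs only the fact $[P]\in\langle[A_i],[A_i']\rangle$ from Proposition~\ref{prop31}, and neither Step~1 nor the permutation argument.
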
 
    \begin{proof}
    From Lemma~\ref{decomposition}, we know that there exist vectors $A_{i_\perp}$ and $A_{i_\perp}'$ such that
\begin{equation}
    \begin{split}
        (A_{i_\perp}| P) = 0, \qquad A_i &= A_{i_\perp} + \alpha_i P,\\
        (A_{i_\perp}'| P) = 0, \qquad A_i' &= A_{i_\perp}' + \alpha_i' P.
    \end{split}
\end{equation}
By~\eqref{inspan}, the projective points $[A_i]$, $[A_i']$, and $[P]$ are coplanar. If $[A_{i_\perp}] \neq [A_{i_\perp}']$, then the line through $A_i$ and $A_i'$ is generated by the span of $A_{i_\perp}$ and $A_{i_\perp}'$. Hence, there exist scalars $u$ and $v$ such that
\begin{equation}
    [P] = [uA_{i_\perp} + vA_{i_\perp}'].
\end{equation}
Therefore,
\begin{equation}
\begin{split}
    (P| P)
    &= (uA_{i_\perp} + vA_{i_\perp}' | P) \\
    &= u (A_{i_\perp}| P) + v (A_{i_\perp}'| P)\\
    &=0,
\end{split}
\end{equation}
a contradiction. Thus $[A_{i_\perp}] = [A_{i_\perp}']$. Since the Apollonius solutions are distinct, we have $\alpha_i \neq \alpha_i'$. Moreover, they satisfy
\begin{equation}\label{quadrat}
    \begin{split}
        (A_{i_\perp} + \alpha P | A_{i_\perp} + \alpha P) &= 0,\\
        (A_{i_\perp}| A_{i_\perp}) + 2\alpha (P| A_{i_\perp}) + \alpha^2 (P| P) &= 0,\\
        \alpha^2 (P| P) &= -(A_{i_\perp}| A_{i_\perp}).
    \end{split}
\end{equation}
Since $\alpha_i$ and $\alpha_i'$ are two distinct solutions of equation~\eqref{quadrat}, it follows that $\alpha_i' = -\alpha_i$. Consequently,
\begin{equation}
\begin{split}
    R_P([A_i'])
        &= R_P([A_{i_\perp} - \alpha_i P])\\
        &= \left[A_{i_\perp} - \alpha_i P - 2\frac{(A_{i_\perp} - \alpha_i P | P)}{(P| P)}P\right]\\
        &= \left[A_{i_\perp} - \alpha_i P - 2\frac{(A_{i_\perp}| P)}{(P| P)}P - 2\frac{-\alpha_i (P| P)}{(P| P)}P\right]\\
        &= \left[A_{i_\perp} + \alpha_i P\right]\\
        &= [A_i].
\end{split}
\end{equation}
Thus, $R_P([A_i']) = [A_i]$, and hence $R_P([A_i]) = [A_i']$.
    \end{proof}

\begin{thm}\label{main1}
    We are given $n+2$ cycles in $ \mathbb{R}^n$ in a set $X=\{X_1,\,\dots, \,X_{n+2}\}$. Let
    \begin{equation}
        \forall_{1\leq i\leq n+2}\colon Ap(X\setminus\{X_i\})=\{A_{i}, A_i'\}. 
    \end{equation}
    Let $A = \{A_1,\,A_2,\,\dots,\,A_{n+2}\}$ and $A' = \{A'_1,\, A'_2,\,\dots,\,A'_{n+2}\}$, assume that
    \begin{equation}
            \forall_{1\leq i\leq n+2}\colon  Ap(A\setminus \{A_i\}) = \{X_i, B_i\},\quad Ap(A'\setminus \{A_i'\}) = \{X_i, B_i'\}.
    \end{equation}
    Then the lines through the centers of the pairs of cycles $B_1$, $\,B_1';\, B_2,\,B_2';\,\dots;$\\$\,B_n,\,B_n'$, where $B_i \neq B_i'$ for all $i$, are concurrent at a point $P_X$.
\end{thm}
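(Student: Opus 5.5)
The plan is to show that the reflection $R_P$ from Definition~\ref{def27}, built from the point $[P]$ of Proposition~\ref{prop31}, maps $B_i$ to $B_i'$. Once this holds, $[B_i]$, $[B_i']$ and $[P]$ span a projective line, and the same projection argument as in Proposition~\ref{prop31} sends every line through the centers of $B_i,B_i'$ to the center $P_X$ of $[P]$. I will first treat the generic case $(P|P)\neq 0$. The case $(P|P)=0$, where $[P]$ is itself a cycle, I will handle at the end.

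The first step uses Lemma~\ref{interchange}, which gives $R_P([A_j])=[A_j']$ for all $j$. By Lemma~\ref{properties}, $R_P$ preserves the form $(\cdot|\cdot)$. Hence it maps $\langle A\setminus\{A_i\}\rangle^\perp$ onto $\langle A'\setminus\{A_i'\}\rangle^\perp$ and preserves $Q$. It therefore restricts to a bijection
\[
Ap(A\setminus\{A_i\})\to Ap(A'\setminus\{A_i'\}),
\]
that is, a bijection $\{X_i,B_i\}\to\{X_i,B_i'\}$.

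The second step checks that $R_P$ fixes $X_i$. Since $(X_i|P)=0$ by the choice of $P$, we get $R_P(X_i)=X_i$. So the bijection sends $X_i$ to $X_i$, and consequently $R_P([B_i])=[B_i']$.

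The third step puts the pieces together. From the definition of $R_P$,
\[
B_i' \sim B_i-2\frac{(B_i|P)}{(P|P)}P \in \operatorname{span}(B_i,P),
\]
so $[P]\in\langle[B_i],[B_i']\rangle$, because $B_i\neq B_i'$. Projecting onto centers, exactly as in the proof of Proposition~\ref{prop31}, the centers of $B_i$, $B_i'$ and $[P]$ are collinear. This holds for every $i$, which gives concurrency at $P_X$.

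The main obstacle is the degenerate case $(P|P)=0$, where $R_P$ is undefined. I would try to show directly that $[P]$ lies on each line $\langle[B_i],[B_i']\rangle$. If $(P|P)=0$, then $[P]\in Ap(X)$ is tangent to all the $X_j$. By the decomposition argument of Lemma~\ref{interchange}, which showed that $[P]$ lies on each line $\langle A_j,A_j'\rangle$, we get $[P]\in\langle A_j,A_j'\rangle\cap Q=\{A_j,A_j'\}$. So $P$ coincides with some $A_j$ for every $j$, which contradicts the uniqueness setup in generic position. Alternatively, one can argue by continuity: the concurrency statement is a closed condition on configurations where it is defined, and configurations with $(P|P)\neq0$ are dense. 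I would present the continuity argument as a fallback. I would also note that $P$ is unique up to scale when the $X_j$ are linearly independent; this is implied by the hypothesis that $\langle X\setminus X_i\rangle^\perp$ is a projective line.
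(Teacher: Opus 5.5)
Your argument for $(P|P)\neq 0$ is the paper's proof. Lemma~\ref{interchange} gives $R_P([A_j])=[A_j']$, and $R_P$ fixes each $[X_i]$ because $(X_i|P)=0$. The isometry $R_P$ therefore carries $Ap(A\setminus\{A_i\})$ onto $Ap(A'\setminus\{A_i'\})$, so it sends $[B_i]$ to $[B_i']$. Then $[P]\in\langle[B_i],[B_i']\rangle$, and the projection step of Proposition~\ref{prop31} concludes. Your bijection phrasing is a clean substitute for the paper's ``without loss of generality $[B_i]\neq[X_i]$''. Writing $R_P(B_i)=B_i-cP$ with $c\neq 0$ (forced by $[B_i]\neq[B_i']$) is equivalent to the paper's $P_\perp\pm\alpha P$ computation.

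The case $(P|P)=0$, however, is left open. You reach the same point as the paper: $[P]\in Y_j\cap Q=\{[A_j],[A_j']\}$ for every $j$. (This comes from \eqref{inspan} in Proposition~\ref{prop31}, not from Lemma~\ref{interchange}.) But ``contradicts the uniqueness setup in generic position'' appeals to a genericity hypothesis the theorem does not contain. The density fallback would need the solutions $B_i,B_i'$ to vary continuously and the two-solution hypotheses to survive perturbation, and you have neither.

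The missing step is the concrete one the paper uses: repeated $[P]$ forces linear dependence among the $A_j$.
\begin{itemize}
\item Since $[A_j]\neq[A_j']$, for each of the $n+2\geq 3$ indices exactly one of $[A_j],[A_j']$ equals $[P]$.
\item By pigeonhole, $[P]$ occurs twice in $A$ or twice in $A'$, say $[A_a]=[A_b]=[P]$.
\item For $i\notin\{a,b\}$, the $n+1$ cycles of $A\setminus\{A_i\}$ are then linearly dependent, so $\langle A\setminus\{A_i\}\rangle^{\perp}$ comes from a subspace of $V$ of dimension at least $3$.
\item That subspace contains the real null vector $X_i$. A real quadric in a projective space of dimension at least $2$ with a real point has either one point or infinitely many. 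So $Ap(A\setminus\{A_i\})$ is the single cycle $[X_i]$ or is infinite, not a genuine pair $\{X_i,B_i\}$.
\end{itemize}
This is the paper's ``not linearly independent, so $B_i,B_i'$ are not well defined'' contradiction. It excludes $(P|P)=0$ outright, so no limiting argument is needed.

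Your side remark on uniqueness of $[P]$ is unnecessary. It is also not quite right: the two-solution hypothesis only gives independence of each $n+1$ of the $X_j$, not of all $n+2$.
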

\begin{proof}
    Let $[P]$ be the projective point defined as in Proposition~\ref{prop31}. If $(P|P)=0$, then $[P]$ is a cycle tangent to all $[X_i]$. Thus, $n+2$ of $[A_1]$, $\dots,$ $[A_{n+2}],$ $[A_1'],$ $\dots,\,[A_{n+2}']$ are equal to $[P]$, so they are not linearly independent. Thus, for each $1 \leq i \leq n+2$, cycles $[B_i]$ and $[B'_i]$ are not clearly defined, a contradiction to the construction in the problem statement. Therefore, $(P|P)\neq 0$. Define $R_P$ as in Definition~\ref{def27}. First observe that
\begin{equation}
    \forall_{1\leq i\leq n+2}\colon R_P([X_i])=\left[ X_i-2\frac{(X_i|P)}{(P|P)}P\right]=[X_i].
\end{equation}
Thus, the set of cycles $X$ is fixed under this transformation. We now show that $R_P([B_i])=[B_i']$. Since $[B_i]\neq [B_i']$, either $[B_i]\neq [X_i]$ or $[B_i']\neq [X_i]$. Without loss of generality, assume that $[B_i]\neq [X_i]$. From Lemma~\ref{properties} and Lemma~\ref{interchange} we know that
\begin{equation}
    \begin{split}
        0&=(B_i|A_j)=(R_P(B_i)|R_P(A_j))=(R_P(B_i)|A_j'),\\
        0&=(B_i|B_i)=(R_P(B_i)|R_P(B_i)).
    \end{split}
\end{equation}
Thus, $R_P([B_i])$ is tangent to all cycles in $A'\setminus\{A_i'\}$ and lies on the quadric $Q$. Hence
\begin{equation}
    R_P([B_i])\in Ap(A'\setminus\{A_i'\})=\{[X_i],\,[B_i']\}.
\end{equation}
Moreover, $R_P([B_i])\neq [X_i]$, since $[X_i]$ is fixed by $R_P$, and otherwise we would obtain $[B_i]=[X_i]$, a contradiction. Therefore, $R_P([B_i])=[B_i']$.

By Lemma~\ref{decomposition}, we have
\begin{equation}
    [B_i]=[P_\perp+\alpha P].
\end{equation}
Therefore,
\begin{equation}
    \begin{split}
        [B_i']&=R_P([B_i])\\
            &=R_P([P_\perp+\alpha P])\\
            &=\left[P_\perp+\alpha P-2\frac{(P_\perp|P)}{(P|P)}P-2\frac{\alpha(P|P)}{(P|P)}P\right]\\
            &=[P_\perp-\alpha P].
    \end{split}
\end{equation}
Thus, $[P]$, $[B_i]$, and $[B_i']$ all lie in $\langle P,P_\perp\rangle$. Since $[B_i]\neq [B_i']$, it follows that
\begin{equation}
    [P]\in \langle [B_i],\,[B_i']\rangle.
\end{equation}
As in the proof of Proposition~\ref{prop31}, we conclude that $P_X$ lies on the line through the centers of the pair of cycles $B_i$ and $B_i'$.
\end{proof}
\begin{rem}
      Observe that in the above theorem the labels $A_i$ and $A_i'$ may be interchanged arbitrarily. Thus, for each set $A\backslash{A_i}$, there are $\frac{2^{n+1}}{2}=2^n$ possible to assign primes. We divide by $2$ because simultaneously reversing the primes in all pairs $(A_i, A_i')$ yields the same line through the centers, since it merely interchanges the cycles $B_i$ and $B_i'$. Therefore, there are $n\cdot 2^n$ lines constructed as in Theorem~\ref{main1}, all passing through the point $P_X$.
\end{rem}

\subsection{Existence of cycle tangent to given hyperplanes}\label{sec32}

This subsection is devoted to the second main result of the paper. We prove the existence of a cycle tangent to given sets of hyperplanes. For instance, in two dimensions these are sets of two lines, while in three dimensions they are sets of cones, viewed as sets of hyperplanes tangent to them. In the two and three dimensions, a degeneration of this theorem yields K. Morita's theorem, which we present in Corollary~\ref{mort}~\cite{general}. 

    \begin{thm}\label{main2}
        We are given $n+2$ cycles in $ \mathbb{R}^n$ in a set $X=\{X_1, \,\dots, \,X_{n+2}\}$. Let
    \begin{equation}
        \forall_{1\leq i\leq n+2}\colon Ap(X\setminus\{X_i\})=\{A_{i}, A_i'\}. 
    \end{equation}
    We define  $T_i$ $(1 \leq i \leq n+2)$ as the set of hyperplanes tangent to two cycles $A_i$ and $A_i'$. We assume that the point $P_X$, the point of concurrency of the lines through the centers of the solutions to the Apollonius problem, is not a point at infinity. Then there exists a sphere with the center $P_X$ tangent to all hyperplanes in sets $T_1,\,T_2,\,\dots,\,T_{n+2}$.
    \end{thm}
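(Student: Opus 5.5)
The plan is to reuse the projective point $[P]\in\langle [X_1],\dots,[X_{n+2}]\rangle^{\perp}$ from Proposition~\ref{prop31}. I will show that every hyperplane in $T_1\cup\dots\cup T_{n+2}$ is Lie-orthogonal to $P$. I will then move $P$ along a direction that every hyperplane ignores, so that it lands on the Lie quadric.

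\emph{Step 1 (the case $(P|P)=0$).} If $(P|P)=0$, then $[P]$ is itself a cycle. It is tangent to all $X_i$, so it lies in every $Ap(X\setminus\{X_i\})$ and hence equals one of $A_i,A_i'$ for each $i$. I expect this case to need separate handling: either it is degenerate, or the cycle $[P]$ can serve directly in Step 3. I will treat it first and then assume $(P|P)\neq 0$, as in the proof of Theorem~\ref{main1}.

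\emph{Step 2 (every tangent hyperplane is orthogonal to $P$).} By Lemma~\ref{interchange} and its proof, I can choose representatives
\[
A_i=A_{i_\perp}+\alpha_i P,\qquad A_i'=A_{i_\perp}-\alpha_i P,
\]
with $(A_{i_\perp}|P)=0$ and $\alpha_i\neq 0$, since $A_i\neq A_i'$. Let $H$ be a hyperplane in $T_i$. Tangency to $A_i$ and to $A_i'$ means $(H|A_i)=(H|A_i')=0$, that is,
\[
(H|A_{i_\perp})+\alpha_i(H|P)=0 \quad\text{and}\quad (H|A_{i_\perp})-\alpha_i(H|P)=0 .
\]
Subtracting gives $(H|P)=0$. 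So all hyperplanes in all the $T_i$ lie in $\langle[P]\rangle^{\perp}$, and each is a null vector with coordinate $v=h_{n+2}=0$.

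\emph{Step 3 (constructing the sphere).} Let $E=(0,\dots,0,1)$. Because hyperplanes have $h_{n+2}=0$, we get $(H|E)=h_{n+2}=0$ for every hyperplane $H$. Set $S=P+tE$. Then $(H|S)=0$ for all $H\in\bigcup T_i$ and every $t$. Moreover
\[
(S|S)=(P|P)+2t\,p_{n+2}.
\]
The hypothesis that $P_X$ is not at infinity says exactly that $p_{n+2}\neq 0$. So $t=-(P|P)/(2p_{n+2})$ makes $S$ a null vector, i.e.\ a cycle. It has $v=p_{n+2}\neq 0$, so it is a genuine sphere. Since $S$ shares coordinates $p_2,\dots,p_{n+2}$ with $P$, its homogeneous center is $[p_2:\dots:p_{n+1}:p_{n+2}]$, which is $P_X$. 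Tangency to every $H$ follows from $(H|S)=0$.

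The only delicate points are the degenerate cases. One is $(P|P)=0$ from Step 1. The other is $p_1=0$, where the "sphere" has radius $p_1/p_{n+2}=0$ and is a point sphere lying on all the hyperplanes. I expect these degenerate cases, rather than the linear algebra, to be the main obstacle to a clean statement.
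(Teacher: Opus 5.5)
Your proposal is correct and is essentially the paper's proof: your $S=P+tE$ with $t=-(P|P)/(2p_{n+2})$ is exactly the paper's $P'$, and tangency follows, as in the paper, from $(H|P)=0$ together with $h_{n+2}=0$. The only difference is how you get $(H|P)=0$. The paper takes it directly from $[P]\in\langle[A_i],[A_i']\rangle$ (equation \eqref{inspan}), which holds even when $(P|P)=0$, so your Step 1 case split is unnecessary; in any case it closes at once, since then $[P]$ is itself $A_i$ or $A_i'$ and you can take $t=0$.
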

    \begin{proof}
            We again set
        \begin{equation}
        [P] = [p_1 : p_2 : \dots : p_{n+3}],
        \end{equation}
        for the projective point $[P]$ from Proposition~\ref{prop31}. Since $P_X$ is not a point at infinity, we have $p_{n+2}\neq0$. Define
        \begin{equation}
        [P'] = \left[p_1 : p_2 : \dots : p_{n+2} : p_{n+3}-\frac{(P|P)}{2p_{n+2}}\right].
        \end{equation}
        Then
        \begin{equation}
        \begin{split}
        (P'|P') &= \left(P-\left(0,\,\dots,\,0,\,\frac{(P|P)}{2p_{n+2}}\right)\middle|P-\left(0,\,\dots,\,0,\,\frac{(P|P)}{2p_{n+2}}\right)\right)\\
        &= (P|P) - 2\left(P\middle|\left(0,\,\dots,\,0,\,\frac{(P|P)}{2p_{n+2}}\right)\right)\\
        &= (P|P) - (P|P)\\
        &= 0.
        \end{split}
        \end{equation}
        Thus, $[P']$ is a cycle. Since $p_{n+2}\neq0$, $[P']$ is a sphere. Moreover, by the construction of $P'$, the point $P_X$ is the center of the cycle $P'$.
        Let $[T] = [t_1 : t_2 : \dots : t_{n+1} : 0 : t_{n+3}]$ be a hyperplane tangent to $[A_1]$ and $[A_1']$. From~\eqref{inspan} we know that
        \begin{equation}
        [P] \in \langle [A_1],\,[A_1']\rangle.
        \end{equation}
        Hence
        \begin{equation}
        \begin{split}
        (P'|T) &= \left(uA_1+vA_1'-\left(0,\,\dots,\,0,\,\frac{(P|P)}{2p_{n+2}}\right)\middle|T\right)\\
        &= u(A_1|T)+v(A_1'|T)-\left(\left(0,\,\dots,\,0,\,\frac{(P|P)}{2p_{n+2}}\right)\middle|T\right)\\
        &= 0.
        \end{split}
        \end{equation}
        Analogously, we prove that $[P']$ is tangent to each hyperplane from the sets $T_i$. Thus, $[P']$ is the sphere from the statement.

\end{proof}

\begin{rem}
    We prove the above theorems for spheres in $\mathbb{R}^n$. By applying an affine shear transformation, we can replace all spheres, together with their Apollonius solutions, by similar ellipsoids with the same orientation, so that spheres appear as a special case.
\end{rem}

\section{Degenerations and Aplications}\label{sec4}
In this section, we present applications of the theorems established in the previous section. For the sake of clarity, both the results and the accompanying figures are presented in the plane. It should be emphasized, however, that all of the results obtained here extend directly to dimension $n$, yielding natural counterparts of these theorems in every dimension. This section is divided into two main parts. The first, presented in Subsection~\ref{subsec42}, deals with applications to classical triangle geometry. The second, presented in Subsection~\ref{subsec41}, concerns the classical configuration of mutually tangent circles, which has been the subject of numerous studies~\cite{tobefilled,general,kissprecise,ijcdm}. 

\subsection{Degenerations in a triangle}\label{subsec42}

We now turn to applications of the proved theorems in triangle geometry. We begin with very elementary facts, namely the existence of the incenter and the circumcenter, and then proceed to several further, more intricate theorems. These results admit direct analogues in higher dimensions; for example, in three dimensions, a triangle is replaced by a tetrahedron.

We also note that every triangle may be replaced by a circular horn, that is, a triangle whose sides are circular arcs~\cite{horn}. This does not affect the proofs, since our theorems concern cycles.

\begin{figure}[h]
    \centering
  \begin{minipage}{0.45\textwidth}
    \centering
    \input{example1fig}
    \caption{Example~\ref{check1}: Circumcenter.}
    \label{fig11}
  \end{minipage}\hfill
  \begin{minipage}{0.45\textwidth}
    \centering
    \input{example2fig}
    \caption{Example~\ref{check2}: Incenter.}
    \label{fig0}
  \end{minipage}
\end{figure}

\begin{ex}[Circumcenter]\label{check1}
    The perpendicular bisectors of a triangle are concurrent at a single point, the circumcenter.
\end{ex}
\begin{proof}
    Let $ABC$ be a triangle and let $\omega$ be a circle (Fig.~\ref{fig11}). By applying Proposition~\ref{prop31} to
    \begin{equation}
    X = \{A,\,B,\,C,\,\omega\},
    \end{equation}
    we obtain that the corresponding lines determined by the solutions to the Apollonius problem are concurrent. These lines are the perpendicular bisectors of $BC$, $CA$, and $AB$, which completes the proof.
\end{proof} 

\begin{ex}[Incenter]\label{check2}
    The angle bisectors of a triangle are concurrent at a single point, the incenter.
\end{ex}

\begin{proof}
    Let $ABC$ be a triangle, and let $\omega$ be a circle inside it. We orient cycles as in Fig.~\ref{fig0}. By applying Proposition~\ref{prop31} to
    \begin{equation}
         X = \{BA,\,AC,\,CB,\,\omega\},
    \end{equation}
    we obtain that the corresponding lines determined by the solutions to the Apollonius problem are concurrent at a point $I$. These lines are precisely the bisectors of the angles $BAC$, $ACB$, and $CBA$, which completes the proof.
\end{proof}

\begin{rem}
    By applying Theorem~\ref{main1} in the above proof, we obtain that $I$ is the center of the inscribed circle of triangle $ABC$.
\end{rem} 

We now present the theorem that motivated this paper. As far as we are aware, this result is new. The case when the circle $P$ in the following corollary degenerates to a point was added by the author to the Encyclopedia of Triangle Centers  $($$X(1)$~\cite{kimberling}$)$. The result follows immediately from the proved theorems.

\begin{cor}[Outer Apollonius' Circles Theorem in a Triangle]\label{outerapol}
Let $ABC$ be a triangle and let $\omega$ be any circle with center $P$ inside triangle $ABC$. Let $\omega_a$ be the smaller of the two circles tangent to $\omega$ and to the lines $AB$ and $AC$; define $\omega_b$ and $\omega_c$ cyclically. Similarly, let $\omega'_a$ be the larger of the two circles tangent to $\omega$ and to the lines $AB$ and $AC$; define $\omega'_b$ and $\omega'_c$ cyclically. Let $O_1$ be the outer Apollonius' circle of the $\omega_a$, $\omega_b$, and $\omega_c$, and let $O_2$ be the outer Apollonius' circle of the $\omega'_a$, $\omega'_b$, and $\omega'_c$. Then the line $O_1O_2$ passes through the fixed point $I$, the incenter of $\triangle ABC$.
\end{cor}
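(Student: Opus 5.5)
The plan is to obtain the corollary as a degenerate instance of Theorem~\ref{main1} with $n=2$, applied to the four cycles
\begin{equation}
X=\{BA,\,AC,\,CB,\,\omega\},
\end{equation}
where the three side lines are oriented as in the proof of Example~\ref{check2}. With this orientation the interior of the triangle lies on a common side of all three lines, and $\omega$ is oriented compatibly. We also fix the orientation of $\omega$ so that the circles below come out as the stated ones.

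\textbf{Step 1: identify the first-level Apollonius cycles.} Removing $\omega$ leaves the three oriented lines. Their two Apollonius cycles are the incircle of $ABC$ and the point at infinity: the point at infinity is tangent to every line, as a Lie cycle. Removing the line $CB$ leaves $BA$, $AC$ and $\omega$. With compatible orientations, the two Apollonius cycles are exactly the two circles inscribed in angle $A$ and tangent to $\omega$, namely $\omega_a$ and $\omega'_a$. The same holds cyclically for the other two lines. By the Remark after Theorem~\ref{main1}, primes may be assigned freely, so we choose
\begin{equation}
A=\{\omega_a,\,\omega_b,\,\omega_c,\,\cdot\,\},\qquad A'=\{\omega'_a,\,\omega'_b,\,\omega'_c,\,\cdot\,\}.
\end{equation}
By Proposition~\ref{prop31} (equivalently Example~\ref{check2}), the point $P_X$ lies on the three angle bisectors, so $P_X=I$.

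\textbf{Step 2: identify the second-level cycles.} Take the index $i$ corresponding to $\omega$. Then $A\setminus\{A_i\}=\{\omega_a,\omega_b,\omega_c\}$, and $Ap$ of this set consists of $\omega$ and one further cycle $B_i$. We must check that $B_i$ is the outer Apollonius circle $O_1$. The key is orientation. $\omega$ is tangent to each $\omega_a$ in the orientation inherited from $X$. Since $\omega$ sits inside the region bounded by the three small circles, $\omega$ is the inner solution with these orientations. The only other solution, $B_i$, therefore carries the same orientation pattern and encloses all three circles, so it is $O_1$. In the same way, $B_i'=O_2$. Theorem~\ref{main1} then says that the line through the centers of $B_i$ and $B_i'$, that is, the line $O_1O_2$, passes through $P_X=I$.

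\textbf{Main obstacle: orientations and nondegeneracy.} The main difficulty is to justify that, with oriented tangency, $Ap(\{\omega_a,\omega_b,\omega_c\})$ is exactly $\{\omega,O_1\}$ and not, say, $\{\omega,\text{inner circle}\}$. My plan is to argue by signs of radii, using the fact that oriented tangency means $(X|Y)=0$, which gives the condition $|c_X-c_Y|=|r_X-r_Y|$. If $\omega$ and all $\omega_a,\omega_b,\omega_c$ receive radii of equal sign, then $\omega$ is internally tangent to each of them in the oriented sense. Oriented tangency to three circles of equal sign forces the solution to be either enclosed by all three or to enclose all three. Hence the second solution is $O_1$, taken with its radius sign. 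If this sign bookkeeping turns out inconsistent, I will instead flip the orientation of $\omega$. I will also need to verify the hypothesis $B_i\neq B_i'$, which holds because $O_1\neq O_2$ for generic $\omega$.
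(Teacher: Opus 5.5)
Your skeleton matches the paper's proof. Apply Theorem~\ref{main1} to $X=\{\omega,BA,AC,CB\}$, with $Ap(\omega,BA,AC)=\{\omega_a,\omega'_a\}$ and its cyclic analogues. Take $B_1=Ap(\omega_a,\omega_b,\omega_c)\setminus\{\omega\}$ and $B_1'=Ap(\omega'_a,\omega'_b,\omega'_c)\setminus\{\omega\}$, and get $P_X=I$ from the angle bisectors. The paper justifies $B_1=O_1$ and $B_1'=O_2$ only by ``orient the cycles as in the figure.'' So the step you flag is exactly the one the paper leaves implicit. Your argument for it, however, does not work as written.

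The intended configuration is the one in the figure for Example~\ref{check2}. There $\omega_a$ is the small circle near $A$ and $\omega'_a$ is the large one beyond $\omega$. Both lie in angle $BAC$, so both carry the incircle's orientation, and both are externally tangent to $\omega$. Hence $\omega$ must get the sign opposite to $\omega_a,\omega_b,\omega_c$.

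Your plan gives them equal signs. That turns $(\omega|\omega_a)=0$ into internal tangency, so for $\omega$ inside the triangle both $\omega_a$ and $\omega'_a$ would contain $\omega$. This is a different configuration, and it contradicts your own Step 2.

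The inference fails whichever sign you pick. A cycle tangent to three equally signed circles must enclose or be enclosed by all of them only if it has that same sign. A solution of the opposite sign is externally tangent to all three, as $\omega$ itself is. So nothing you wrote forces the second element of $Ap(\omega_a,\omega_b,\omega_c)$ to be the enclosing circle. Also, ``carries the same orientation pattern and encloses all three'' is self-contradictory, since a cycle with $\omega$'s sign is externally tangent to the $\omega_j$.

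The repair is to argue in the opposite direction. Give $O_1$ the sign of the $\omega_j$. Because $O_1$ encloses each $\omega_j$ and is internally tangent to it, $|c_{O_1}-c_{\omega_j}|=|r_{O_1}-r_{\omega_j}|$. Hence $(O_1|\omega_j)=0$ and $O_1\in Ap(\omega_a,\omega_b,\omega_c)$. Also $\omega$ lies in this set by construction, since $(\omega_j|\omega)=0$, and $O_1\neq\omega$. The two-element hypothesis $Ap(\omega_a,\omega_b,\omega_c)=\{\omega,B_1\}$ of Theorem~\ref{main1} then forces $B_1=O_1$, and likewise $B_1'=O_2$. With this, you never need to infer the shape of the unknown second solution from the position of $\omega$.
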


\begin{figure}[h]
    \centering
  \begin{minipage}{0.45\textwidth}
    \centering
   \includegraphics[scale=0.55]{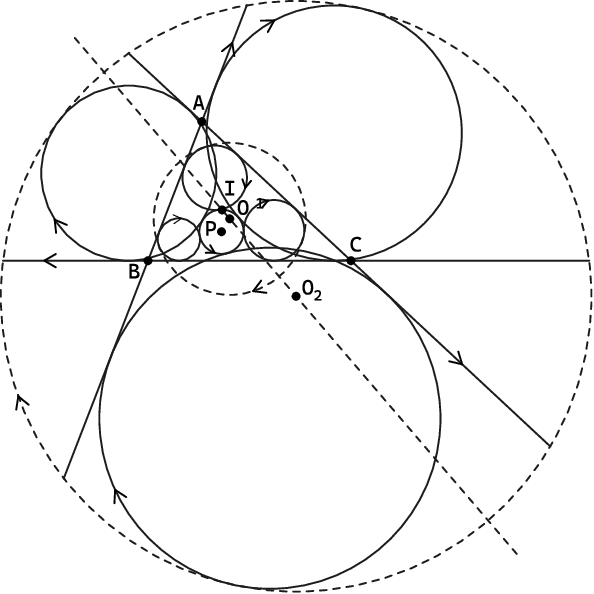}
    \caption{Corollary~\ref{outerapol}: Points $I$, $O_1$, and $O_2$ are collinear.}
     \label{cor49fig}
  \end{minipage}\hfill
  \begin{minipage}{0.45\textwidth}
    \centering
    \input{corollary49fig2}
    \caption{Corollary~\ref{pozdro}: Points $I$, $O$, and $S$ are collinear.}
    \label{mix}
  \end{minipage}
\end{figure}

\begin{proof}
        We orient the cycles as in Fig.~\ref{cor49fig}. Observe that
    \begin{equation}
        \begin{split}
            Ap(\omega,\,BA,\,AC) &= (\omega_a,\, \omega_a'),\\
            Ap(\omega,\,CB,\,BA) &= (\omega_b,\, \omega_b'),\\
            Ap(\omega,\,AC,\,CB) &= (\omega_c,\, \omega_c').
        \end{split}
    \end{equation}
    Applying Theorem~\ref{main1} to
    \begin{equation}
        X = \{\omega,\, BA,\, AC,\, CB\},
    \end{equation}
    and sets
    \begin{equation}
        \begin{split}
            B_1  &= Ap(\omega_a,\,\omega_b,\,\omega_c)\setminus\{\omega\},\\
            B_1' &= Ap(\omega_a',\,\omega_b',\,\omega_c')\setminus\{\omega\},
        \end{split}
    \end{equation}
    we obtain that the line $O_1O_2$ passes through the point $I$, the point $P_X$ from Theorem~\ref{main1}.
\end{proof}

This theorem was of particular interest to us because of the following corollary.

\begin{cor}[A Duality for the Collinearity of Centers]
    The points $P$, $I$, and $O_1$ are collinear if and only if the points $P$, $I$, and $O_2$ are collinear.
\end{cor}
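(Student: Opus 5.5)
The plan is to derive the duality directly from Corollary~\ref{outerapol}, which already gives that $I$, $O_1$, $O_2$ are collinear. The only real work is in the degenerate cases where one of the centers coincides with $I$.

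\textbf{Generic case.} Assume $O_1\neq I$ and $O_2\neq I$.
\begin{itemize}
\item By Corollary~\ref{outerapol}, the line $IO_1$ equals the line $O_1O_2$, which in turn equals the line $IO_2$; call it $\ell$.
\item If $P=I$, both collinearity statements hold trivially.
\item Otherwise, $P$, $I$, $O_1$ are collinear exactly when $P\in IO_1=\ell$, and likewise $P$, $I$, $O_2$ are collinear exactly when $P\in IO_2=\ell$.
\end{itemize}
The two conditions are identical, which gives the equivalence.

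\textbf{Degenerate case.} Suppose $O_1=I$. Then $P$, $I$, $O_1$ are trivially collinear, so I must show $P$, $I$, $O_2$ are collinear too. The approach is to prove that $O_1=I$ forces $O_2=I$; by symmetry the same then holds with the roles of $O_1$ and $O_2$ exchanged. For this I go back to the proof of Theorem~\ref{main1}:
\begin{itemize}
\item There $[B_1]=[P_\perp+\alpha P]$ and $[B_1']=[P_\perp-\alpha P]$ with $(P_\perp|P)=0$.
\item For a vector $Z$, write $\pi(Z)=(z_2,\dots,z_{n+2})$ for its homogeneous center.
\item $O_1=I=P_X$ means $\pi(P_\perp)+\alpha\pi(P)$ is proportional to $\pi(P)$. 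Since $\pi(P)\neq 0$, this forces $\pi(P_\perp)=\mu\,\pi(P)$ for some scalar $\mu$.
\item Then $\pi(P_\perp-\alpha P)=(\mu-\alpha)\pi(P)$.
\item If $\mu\neq\alpha$, this says $O_2=P_X=I$, as required.
\end{itemize}

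\textbf{Main obstacle.} The remaining subcase is $\mu=\alpha$, where the homogeneous center of $B_1'$ vanishes, so its center is undefined. I expect this to be the step needing care. I would rule it out as follows.
\begin{itemize}
\item If $\pi(B_1')=0$, only the coordinates $r$ and $w$ of $B_1'$ can be nonzero.
\item The condition $(B_1'|B_1')=0$ then gives $-r^2=0$, so $r=0$.
\item Hence $B_1'$ is proportional to $(0,\dots,0,1)$, the point at infinity.
\item But $B_1'$ is the outer Apollonius circle $O_2$ of $\omega_a'$, $\omega_b'$, $\omega_c'$, which is a genuine circle, so this is impossible.
\end{itemize}
Hence $O_1=I$ if and only if $O_2=I$. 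In that case both collinearity statements hold trivially, and together with the generic case this completes the argument.
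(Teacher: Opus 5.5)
Your proposal is correct and follows the paper's reasoning: the paper gives no separate proof and treats this corollary as an immediate consequence of Corollary~\ref{outerapol} ($I$ lies on $O_1O_2$), which is exactly your generic case. Your additional check that $O_1=I$ forces $O_2=I$, using $[B_1]=[P_\perp+\alpha P]$, $[B_1']=[P_\perp-\alpha P]$ and excluding $\pi(B_1')=0$ because $B_1'$ would then be the point at infinity, is a sound refinement that the paper omits; just keep the Lie vector $P$ notationally distinct from the center $P$ of $\omega$.
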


This result allows one to pass from one collinearity relation to another, which is occasionally useful when reducing a difficult problem in this configuration to a simpler one. For instance, one may take $P$ to be the Gergonne point of triangle $ABC$, for which one of the two collinearities is easy to prove by homothety, and then deduce the other automatically (see X(7), X(30332))~\cite{kimberling,montesdeoca}.

As a final example in this subsection, we present another interesting degeneration leading to a collinearity result in a configuration involving mixtilinear circles\footnote{A mixtilinear circle is a circle tangent to two sides of a triangle and to its circumcircle.} (Fig.~\ref{mix}). The corresponding generalization to higher dimensions is also of interest, especially for mixtilinear spheres in three dimensions, where such a result appears to be new.

\begin{cor}\label{pozdro}
    There is given $\triangle ABC$ with a circumcircle $\Omega$ with center $O$ and incenter $I$. Circle $\omega$ with center $S$ is tangent internally to the circles $\omega_A$, $\omega_B$, and $\omega_C$, which are three mixtilinear circles in $\triangle ABC$. Then points $S$, $I$ and $O$ are collinear.
\end{cor}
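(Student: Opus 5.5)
We will obtain Corollary~\ref{pozdro} as a degeneration of Theorem~\ref{main1}, in the same way Corollary~\ref{outerapol} was obtained. The natural choice is
\begin{equation}
X=\{\Omega,\,BA,\,AC,\,CB\},
\end{equation}
with the three side lines oriented as in Example~\ref{check2} and $\Omega$ oriented so that its tangency with each mixtilinear circle is internal in the directed sense. The first step is to identify the first-level Apollonius cycles. The two cycles of $Ap(\Omega,BA,AC)$ should be the $A$-mixtilinear incircle $\omega_A$ and a second cycle $\omega_A'$ tangent to $\Omega$ and to the lines $AB$, $AC$ with the same orientation. Define $\omega_B,\omega_B'$ and $\omega_C,\omega_C'$ cyclically. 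The fourth pair, $Ap(BA,AC,CB)$, consists of the incircle and the point cycle $I$ (or the oriented incircle and its partner), depending on the orientations.

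The second step is to compute $P_X$. By Proposition~\ref{prop31}, $P_X$ lies on the line through the centers of each pair $A_i,A_i'$. The line of centers of $\omega_A,\omega_A'$ is the bisector of angle $A$, since both circles are tangent to $AB$ and $AC$. Hence $P_X$ is the intersection of two such bisectors, so $P_X=I$. This matches the incenter computation in Example~\ref{check2} and Corollary~\ref{outerapol}.

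The third step applies Theorem~\ref{main1} with $A=\{\omega_A,\omega_B,\omega_C,\cdot\}$ and $A'=\{\omega_A',\omega_B',\omega_C',\cdot\}$, taking $i$ to be the index of $\Omega$. Then $Ap(\omega_A,\omega_B,\omega_C)=\{\Omega,B\}$ and $Ap(\omega_A',\omega_B',\omega_C')=\{\Omega,B'\}$. The cycle $B$ is exactly the circle $\omega$ tangent internally to the three mixtilinear circles, so its center is $S$. Theorem~\ref{main1} then says that the line through the centers of $B$ and $B'$ passes through $P_X=I$. This alone gives only that $S$, $I$ and the center of $B'$ are collinear, not $S$, $I$, $O$.

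The main obstacle is therefore to bring $O$ onto this line. I would do this by identifying $B'$ so that its center is forced onto $OI$. One option is to choose the orientations so that the primed mixtilinear-type circles $\omega_A',\omega_B',\omega_C'$ are the reflections of $\omega_A,\omega_B,\omega_C$ under $R_P$, in which case $B'=R_P(B)$ is some explicit circle. A simpler route avoids $B'$ altogether: the argument in the proof of Theorem~\ref{main1} gives $[P]\in\langle[B],[B']\rangle$, and by Proposition~\ref{prop31} we also have $[P]\in\langle[\Omega],[\text{pair partner}]\rangle$. I would first try to show directly that $[P]$, $[\Omega]$ and $[B]$ span a plane, i.e.\ lie on one projective line. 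For instance, if $B'$ turns out to be a circle concentric with $\Omega$, or if $\Omega$ itself equals $B'$ under a suitable orientation with $X_i$ replaced appropriately, then the centers $O$, $S$, $I$ are collinear by the projection argument of Proposition~\ref{prop31}. Verifying which orientation choice makes $B'$ concentric with $\Omega$ is the step I expect to require real care.
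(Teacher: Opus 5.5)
Your setup matches the paper's: Theorem~\ref{main1} applied to $X=\{\Omega,BA,AC,CB\}$, with $P_X=I$ and $\omega$ obtained as a second-level Apollonius cycle. But the proposal stops at the step that carries the content. You never identify the partner of $\omega_A$ in $Ap(\Omega,BA,AC)$, so you never identify the other second-level cycle. What you actually prove is only that $S$, $I$ and the center of an unspecified cycle $B'$ are collinear.

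The missing idea is that this partner is the degenerate \emph{point cycle} $A$. The vertex lies on $\Omega$, on $AB$ and on $AC$, and a point cycle is tangent (in the Lie sense, $(X|Y)=0$) to every cycle through it, whatever that cycle's orientation. Hence $Ap(\Omega,BA,AC)=\{A,\omega_A\}$, and cyclically $\{B,\omega_B\}$ and $\{C,\omega_C\}$. The lines through the centers of these pairs are the internal angle bisectors, which gives $P_X=I$.

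Take the vertices as one family and the mixtilinear circles as the other. The second-level problem for the vertices is then trivial: $Ap(A,B,C)$ consists of the two orientations of the circle through $A,B,C$, i.e.\ $\{\Omega,-\Omega\}$. So the cycle you were looking for is $-\Omega$, whose center is $O$. The other family gives $Ap(\omega_A,\omega_B,\omega_C)\setminus\{\Omega\}=\{\omega\}$, with center $S$. Theorem~\ref{main1} then says directly that the line $OS$ passes through $I$. Your guess that $B'$ should be concentric with $\Omega$ is correct, but no delicate choice of orientation is involved. The point is to allow the point-cycle solution.

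Two side remarks.
\begin{itemize}
\item Your ``simpler route'' of placing $[P]$, $[\Omega]$ and $[\omega]$ on one projective line cannot work in general. What holds is $[P]\in\langle[\omega],[-\Omega]\rangle$, and $[\Omega]\neq[-\Omega]$ in $\mathbb{P}(V)$. If both incidences held, $[\omega]$ would lie on $\langle[\Omega],[-\Omega]\rangle$, which forces $\omega=\pm\Omega$.
\item $Ap(BA,AC,CB)$ is the oriented incircle together with the point at infinity, not the point cycle $I$, since $I$ does not lie on the sides. This pair plays no role in the argument, however.
\end{itemize}
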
 

\begin{proof}
   We orient the cycles as in Fig.~\ref{mix}. Observe that
\begin{equation}
    \begin{split}
        Ap(\Omega,\,BA,\,AC) &= (A,\, \omega_A),\\
        Ap(\Omega,\,CB,\,BA) &= (B,\, \omega_B),\\
        Ap(\Omega,\,AC,\,CB) &= (C,\, \omega_C).
    \end{split}
\end{equation}
Moreover, the lines through the centers of the pairs $(A,\,\omega_A)$, $(B,\,\omega_B)$, and $(C,\,\omega_C)$ intersect at the point $I$.
Applying Theorem~\ref{main1} to
\begin{equation}
    X = \{\Omega,\, CB,\, BA,\, AC\},
\end{equation}
and to the sets
\begin{equation}
    \begin{split}
        B_1  &= Ap(A,\,B,\,C)\setminus\{\Omega\} = \{-\Omega\},\\
        B_1' &= Ap(\omega_A,\,\omega_B,\,\omega_C)\setminus\{\Omega\} = \{\omega\},
    \end{split}
\end{equation}
we obtain that the line $SO$ passes through the point $I$.
\end{proof}

\subsection{Kissing circles}\label{subsec41}
In this subsection, we derive a few direct consequences of our theorem for the well-studied configuration of kissing circles, that is, circles externally tangent to one another and thus appearing to "kiss"\footnote{The term derives from Frederick Soddy's note in Nature, \textit{The Kiss Precise}~\cite{kissprecise}.}. To simplify the exposition, we first introduce the notion of the Soddy line (see the line $O_1O_2$ in Fig.~\ref{moritafig})~\cite{sodyline}.

\begin{defn}[Soddy line]
    The Soddy line is the line through the centers of the inner and outer Apollonius circles in a configuration of three mutually tangent circles.
\end{defn}

Many papers have been devoted to points lying on this line~\cite{ijcdm,sodyline}. As we shall see, our theorems provide a natural framework for understanding why certain points lie on this line. This also leads naturally to a generalization of the Soddy line for configurations of spheres in $\mathbb{R}^n$.

\begin{defn}[Soddy line in $\mathbb{R}^n$]\label{gensody}
    Let $S_1, \,S_2,\, \dots,\, S_{n+1}$ be $n+1$ spheres externally tangent to each other in $\mathbb{R}^n$. Then their Soddy line is the line through the centers of their inner and outer Apollonius spheres.
\end{defn}

Let us see that Proposition~\ref{prop31} and Theorem~\ref{main1} provides us with a very useful tool to study points lying on the Soddy line, as a Soddy line is just a special case of line thourogh centers of two Apollonius circles for mutually tangent circles. We will see direct results

Proposition~\ref{prop31} and Theorem~\ref{main1} provide a very useful tool for studying points lying on the Soddy line, since the Soddy line in $\mathbb{R}^n$ is a special case of the line through the centers of two Apollonius spheres associated with mutually tangent spheres. We now present some direct consequences.

First, we present K. Morita's theorem on kissing circles, proved in dimensions two and three in~\cite{general}. Fig.~\ref{moritafig} illustrates its two-dimensional version for kissing circles, while Fig.~\ref{gencon} shows the corresponding two-dimensional version for a general configuration, obtained directly from Theorem~\ref{main2}.

\begin{figure}[h]
    \centering
  \begin{minipage}{0.45\textwidth}
    \centering
     \input{corollary43}
    \caption{Corollary~\ref{mort}: K. Morita's Theorem in 2-dimensions.}
    \label{moritafig}
  \end{minipage}\hfill
  \begin{minipage}{0.45\textwidth}
    \centering
    \input{corollary43fig2}
    \caption{Corollary~\ref{mort}: K. Morita's Theorem in general configuration.}
    \label{gencon}
  \end{minipage}
\end{figure}

\begin{cor}[K. Morita's Theorem]\label{mort}
    Let $S_{O1}$, $S_{O2}$, $S_{O3}$, and $S_{O4}$ be four spheres externally tangent to each other and $S$ be an outer sphere tangent to $S_{O1}$, $S_{O2}$, $S_{O3}$, and $S_{O4}$ $($see the figure in~\cite{general}$)$. Construct new spheres $S_I1$ tangent to $S_{O2}$, $S_{O3}$, $S_{O4}$, and $S$, $S_{I2}$ tangent to $S_{O3}$, $S_{O4}$, $S_{O1}$, and $S$, $S_{I3}$ tangent to $S_{O4}$, $S_{O_1}$, $S_{O2}$, and $S$, and $S_{I4}$ tangent to $S_{O1}$, $S_{O2}$, $S_{O3}$, and $S$. Denote by $\{S_{O1}S_{I1}\}$ the common external tangent cone of opposite spheres $S_{O1}$ and $S_{I1}$, and define $\{S_{O2}S_{I2}\}$, $\{S_{O3}S_{I3}\}$, and $\{S_{O4}S_{I4}\}$ similarly. Then there exists a unique sphere inscribed in $\{S_{O1}S_{I1}\}$, $\{S_{O2}S_{I2}\}$, $\{S_{O3}S_{I3}\}$, and $\{S_{O4}S_{I4}\}$.
\end{cor}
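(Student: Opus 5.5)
Corollary~\ref{mort} will be derived as a degenerate case of Theorem~\ref{main2} with $n=3$. For the set $X$ of $n+2=5$ cycles I take the five spheres $X=\{S_{O1},\,S_{O2},\,S_{O3},\,S_{O4},\,S\}$, oriented so that all tangencies among them are oriented contacts. That means the four pairwise externally tangent spheres get one orientation, and the outer sphere $S$, which touches them internally, gets the opposite one. This way every pair of spheres that touch in the picture has vanishing Lie product.

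The main step is to identify the two Apollonius cycles $A_i,\,A_i'$ of each four-element subset $X\setminus\{X_i\}$. First take $X_i=S_{Oi}$ with $1\le i\le 4$. Then $X\setminus\{S_{Oi}\}$ consists of the three spheres $S_{Oj}$, $j\neq i$, together with $S$. By construction $S_{Ii}$ is tangent to all four of these, and $S_{Oi}$ is also tangent to all four, since it touches the other $S_{Oj}$ and $S$. Hence $Ap(X\setminus\{S_{Oi}\})=\{S_{Oi},\,S_{Ii}\}$, after checking that $S_{Oi}\neq S_{Ii}$ as oriented spheres. Consequently $T_i$, the set of planes tangent to both $A_i$ and $A_i'$, is exactly the family of tangent planes of the common external tangent cone $\{S_{Oi}S_{Ii}\}$. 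The orientation must be chosen so that oriented-contact planes correspond to the external rather than internal common tangents; I would check this by the sign convention on $r$ in Definition~\ref{representation}. The fifth subset, $X\setminus\{S\}=\{S_{O1},\ldots,S_{O4}\}$, yields $S$ itself and the inner Soddy sphere. Its tangent family $T_5$ is not needed for the conclusion, so it can simply be discarded: Theorem~\ref{main2} gives a sphere tangent to the planes of all five families, in particular to those of $T_1,\dots,T_4$.

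It remains to verify the hypotheses of Theorem~\ref{main2}. The first hypothesis is that each $Ap(X\setminus\{X_i\})$ consists of exactly two cycles. Here I would argue that the five representing vectors are linearly independent. If they were dependent, $[P]$ would be ambiguous, and in fact the degenerate case $(P|P)=0$ would force a common tangent sphere to all five, which does not exist in this configuration. Then each $\langle X\setminus\{X_i\}\rangle^\perp$ is a projective line, and it meets $Q$ in the two distinct points listed above. The second hypothesis is that $P_X$ is not at infinity. By Proposition~\ref{prop31}, $P_X$ lies on each of the lines through the centers of $S_{Oi}$ and $S_{Ii}$. These four lines are not all parallel, since the centers of the $S_{Oi}$ span a genuine tetrahedron, so their common point is finite.

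I expect the main obstacle to be this nondegeneracy bookkeeping, namely the linear independence of the five vectors and the orientation check making $T_i$ the external tangent cone. For both I would compute directly, placing $S$ as the unit sphere after a Lie or Möbius normalization. Finally, existence in Corollary~\ref{mort} then follows from Theorem~\ref{main2}. Uniqueness follows because a sphere inscribed in a cone must have its center on the cone's axis, which is the line of centers of $S_{Oi}$ and $S_{Ii}$. These four axes meet only at $P_X$, so the center is forced to be $P_X$. Once the center is fixed, the radius is determined by tangency to any one cone.
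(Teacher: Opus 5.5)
Your overall route is the paper's: apply Theorem~\ref{main2} with $n=3$ to $X=\{S_{O1},S_{O2},S_{O3},S_{O4},S\}$, identify the Apollonius pairs, and ignore $T_5$. Your orientation choice ($S_{Oi}$ one sign, $S$ the other) is also the paper's. However, your key identification step is wrong, because the oriented-contact rule is inverted in your reasoning.

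For two spheres the Lie product is $(X|Y)=\tfrac12\bigl((r-r')^2-|c-c'|^2\bigr)$, so oriented contact means $|c-c'|=|r-r'|$. Externally tangent spheres are in oriented contact only when their radii have \emph{opposite} signs, and internally tangent ones only when the signs agree. This has several consequences:
\begin{itemize}
\item Your stated aim, that every touching pair in $X$ has vanishing Lie product, is impossible. Four pairwise externally tangent spheres cannot be pairwise oppositely oriented.
\item Five pairwise orthogonal null vectors would span a totally isotropic subspace of $\mathbb{R}^{4,2}$, which has dimension at most $2$. That contradicts the linear independence you invoke later.
\item With the orientation you actually pick, no pair in $X$ is in oriented contact, so $S_{Oi}\notin Ap(X\setminus\{S_{Oi}\})$.
\item In fact $S_{Oi}$ can never lie there in a nondegenerate configuration. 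It would be orthogonal to all of $X$, forcing $[P]=[S_{Oi}]$ with $(P|P)=0$, and that for every $i$ simultaneously.
\end{itemize}
The correct statement, which is what the paper writes, is $Ap(X\setminus\{S_{Oi}\})=\{S_{Ii},-S_{Oi}\}$. Here $S_{Ii}$ is oriented like $S$, i.e.\ oppositely to $S_{Oi}$.

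This correction is exactly what settles the external-versus-internal question you deferred. An oriented plane is in oriented contact with a sphere of signed radius $r'$ iff the center lies at signed distance $r'$ from it. So the common oriented tangent planes of two spheres are the external common tangents when the radii have equal signs, and the internal ones otherwise.
\begin{itemize}
\item The true pair $(-S_{Oi},S_{Ii})$ has equal signs, so $T_i$ is the external cone $\{S_{Oi}S_{Ii}\}$.
\item Your pair $S_{Oi},S_{Ii}$, with the orientations the Lie product actually forces on it, has opposite signs and would give the internal cone.
\end{itemize}

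Two smaller points. First, your argument that $P_X$ is finite is a non sequitur: four parallel lines can pass through the vertices of a tetrahedron. The paper does not verify this hypothesis either. Second, your uniqueness argument via the cone axes is sound and is a genuine addition. It uses that at least two axes are distinct, which holds because the centers of the $S_{Oi}$ are not collinear. The paper's proof only establishes existence.
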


\begin{proof}
    We work in three dimensions. We orient spheres $S_{O1},\, S_{O2},\, S_{O3},$ and $S_{O4}$ in the positive direction, and we orient spheres $S,\,S_{I1},\, S_{I2},\, S_{I3},\, S_{I4}$ in the negative direction. Observe that
    \begin{equation}
        \begin{split}
            Ap(S_{O1},\,S_{O2},\, S_{O3},\, S) &= \{S_{I4},\,-S_{O4}\},\\
            Ap(S_{O1},\,S_{O2},\, S_{O4},\, S) &= \{S_{I3},\,-S_{O3}\},\\
            Ap(S_{O1},\,S_{O3},\, S_{O4},\, S) &= \{S_{I2},\,-S_{O2}\},\\
            Ap(S_{O2},\,S_{O3},\, S_{O4},\, S) &= \{S_{I1},\,-S_{O1}\}.
        \end{split}
    \end{equation}
    Clearly, in labels from Theorem~\ref{main2}, we have that 
\begin{equation}
        \begin{split}
            T_1 = \{S_{O1}S_{I1}\},\\
            T_2 = \{S_{O2}S_{I2}\},\\
            T_3 = \{S_{O3}S_{I3}\},\\
            T_4 = \{S_{O4}S_{I4}\}.\\
        \end{split}
    \end{equation}
    Then the result follows by applying Theorem~\ref{main2} to the set 
    \begin{equation}
        X=\{S_{O1},\,S_{O2},\,S_{O3},\,S_{O4},\,S\}.
    \end{equation}
\end{proof}
\begin{rem}
    Clearly, Corollary~\ref{mort} is a special case of Theorem~\ref{main2}. The latter yields an $n$-dimensional generalization of Corollary~\ref{mort}. Moreover, it gives an additional property: the center of the inscribed sphere lies on the Soddy line determined by the given $n+2$ spheres; in the case of Corollary~\ref{mort}, this is the Soddy line of $S_{O1}$, $S_{O2}$, $S_{O3}$, and $S_{O4}$.
\end{rem}

We now show how our results can be used to study geometric properties of Apollonian gaskets~\cite{gasket}. As an illustration, we apply them to prove Corollary~\ref{cor45soddy}, which was presented in~\cite{ijcdm}. This directly generalizes that result to $n$-dimensional Apollonian gaskets.

\begin{figure}[h]
    \centering
    \input{col46fig}
    \label{fig1}
    \caption{Corollary~\ref{cor45soddy}: Concurrent lines on the Soddy line}
\end{figure}
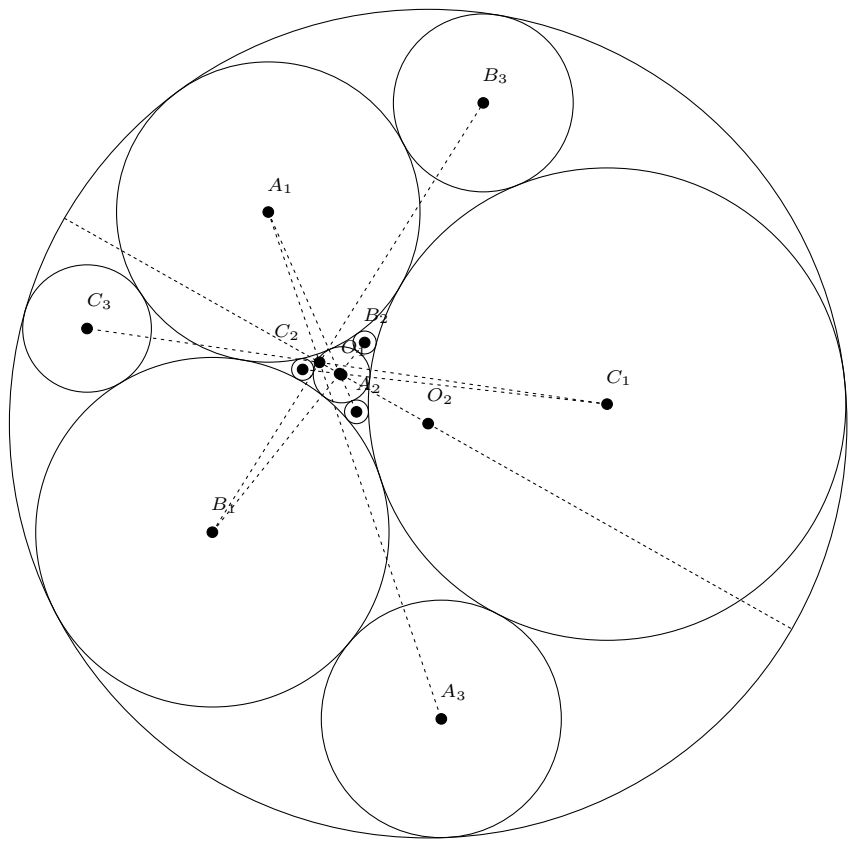
 
\begin{cor}[A Property of the Apollonian Gasket]\label{cor45soddy}
    Let $\Omega_1$, $\Omega_2$, and $\Omega_3$ be three circles with centers $A_1$, $B_1$, and $C_1$, respectively, that are externally tangent to one another. Let $S_1$ and $S_2$ be the inner and outer Apollonius circles, with centers $O_1$ and $O_2$, respectively. Let the circles $\omega_1$, $\omega_2$, $\omega_3$, centered at $A_2$, $B_2$, and $C_2$, distinct from $\Omega_1$, $\Omega_2$, and $\Omega_3$, respectively, be tangent to the triples of circles $(\Omega_2,\,\Omega_3,\,S_1)$, $(\Omega_3,\,\Omega_1,\,S_1)$, and $(\Omega_1,\,\Omega_2,\,S_1)$, respectively. Define the circles $\omega_1'$, $\omega_2'$, $\omega_3'$, centered at $A_3$, $B_3$, and $C_3$ analogously, with $S_2$ in place of $S_1$. Then the lines $A_1A_2$, $B_1B_2$, and $C_1C_2$ are concurrent, and the lines $A_1A_3$, $B_1B_3$, and $C_1C_3$ are concurrent. Moreover, these two points of concurrency lie on the line $O_1O_2$, the Soddy line of the circles $\Omega_1$, $\Omega_2$, and $\Omega_3$.
\end{cor}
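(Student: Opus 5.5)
The plan is to apply Proposition~\ref{prop31} twice, once with $S_1$ and once with $S_2$ as the fourth cycle. Theorem~\ref{main1} should not be needed.

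\emph{Orientation.} First fix orientations so that all the tangencies in the statement become Lie tangencies $(X|Y)=0$. Orient $\Omega_1,\Omega_2,\Omega_3$ positively and $S_1$ so that it is (Lie-)tangent to all three. Then orient $S_2$ as the second Lie cycle tangent to $\Omega_1,\Omega_2,\Omega_3$. With these choices,
\begin{equation*}
Ap(\Omega_1,\Omega_2,\Omega_3)=\{S_1,\,S_2\},
\end{equation*}
and the line through the centers of this pair is $O_1O_2$, the Soddy line.

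\emph{First concurrency point.} Take $X=\{\Omega_1,\Omega_2,\Omega_3,S_1\}$. The cycle $\Omega_1$ is tangent to $\Omega_2,\Omega_3,S_1$, and so is $\omega_1$. Since $\omega_1\neq\Omega_1$ by hypothesis, we get
\begin{equation*}
Ap(\Omega_2,\Omega_3,S_1)=\{\Omega_1,\,\omega_1\},
\end{equation*}
once $\omega_1$ is given the orientation inherited from the tangency conditions. Cyclically, $Ap(\Omega_3,\Omega_1,S_1)=\{\Omega_2,\omega_2\}$ and $Ap(\Omega_1,\Omega_2,S_1)=\{\Omega_3,\omega_3\}$. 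The fourth subset gives $Ap(\Omega_1,\Omega_2,\Omega_3)=\{S_1,S_2\}$. Each of the four triples therefore has exactly two distinct Apollonius cycles, which is the hypothesis of Proposition~\ref{prop31}. The proposition then says that the lines $A_1A_2$, $B_1B_2$, $C_1C_2$ and $O_1O_2$ all pass through $P_X$. This proves the first concurrency and shows that its point lies on the Soddy line.

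\emph{Second concurrency point.} Repeat the argument verbatim with $X'=\{\Omega_1,\Omega_2,\Omega_3,S_2\}$. The pairs are now $\{\Omega_i,\omega_i'\}$ together with $\{S_1,S_2\}$, which again contributes the line $O_1O_2$. Hence $A_1A_3$, $B_1B_3$, $C_1C_3$ are concurrent at $P_{X'}$, and $P_{X'}$ lies on $O_1O_2$.

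\emph{Main obstacle.} The main step to check is the bookkeeping of orientations: each listed tangency (external tangency of $\omega_i$ with $\Omega_j$, and internal tangency of $\omega_i'$ with the outer circle $S_2$) must be realized as Lie tangency with a single consistent orientation of every cycle. Only then do the Apollonius sets contain exactly the claimed two cycles, and not, say, $-\Omega_1$ in place of $\Omega_1$. I would verify this with the standard sign rule: oriented cycles are Lie-tangent iff $d^2=(r_1-r_2)^2$ for the signed radii.

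\emph{Degenerate case.} The remaining point is the case where the concurrency point is at infinity. In that case the proposition gives parallel lines, i.e.\ concurrency in the projective sense. I would either note that the statement is meant projectively, or exclude this case as in Theorem~\ref{main2}.
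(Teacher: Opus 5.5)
You apply Proposition~\ref{prop31} twice, as the paper does, and Theorem~\ref{main1} is indeed not needed. But the orientation you fix is exactly the one that breaks the argument, so your key claim $Ap(\Omega_2,\Omega_3,S_1)=\{\Omega_1,\omega_1\}$ is false.

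\textbf{Why the claim fails.} By your own sign rule, the positively oriented $\Omega_1,\Omega_2$ are externally tangent with equal orientation, so they are not Lie-tangent. Reversing $\Omega_1$ does not help either: $-\Omega_1$ is not Lie-tangent to your $S_1$, which you oriented to be Lie-tangent to the $\Omega_i$.

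\textbf{Why Proposition~\ref{prop31} cannot apply.} Your $S_1$ is Lie-tangent to $\Omega_1,\Omega_2,\Omega_3$ and to itself, so $[S_1]\in\langle X\rangle^{\perp}$ and $[P]=[S_1]$ is isotropic. The form on the line $\langle\Omega_2,\Omega_3,S_1\rangle^{\perp}$ then has rank one, with radical $[S_1]$. Hence $Ap(\Omega_2,\Omega_3,S_1)=\{S_1\}$, and the hypothesis of exactly two distinct Apollonius cycles fails.

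More generally, suppose a member $X_i$ of $X$, with its own orientation, lay in $Ap(X\setminus\{X_i\})$. Then it would lie in the radical of every $\langle X\setminus\{X_j\}\rangle$ with $j\neq i$, and all the other Apollonius sets would degenerate. So your two claims $Ap(\Omega_1,\Omega_2,\Omega_3)=\{S_1,S_2\}$ and $\Omega_1\in Ap(\Omega_2,\Omega_3,S_1)$ contradict each other. The outcome ``$-\Omega_1$ in place of $\Omega_1$'' that you wanted to exclude is in fact forced.

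\textbf{The repair.} Use the paper's orientation: put into $X$ the copy of $S_1$ oriented like the $\Omega_i$, i.e.\ $X=\{\Omega_1,\Omega_2,\Omega_3,-S_1\}$ in your convention.
\begin{itemize}
\item $Ap(\Omega_1,\Omega_2,\Omega_3)=\{S_1,S_2\}$ still holds, and these two cycles are no longer members of $X$.
\item $Ap(\Omega_2,\Omega_3,-S_1)$ consists of the negatively oriented $\Omega_1$ and $\omega_1$. Indeed, $-\Omega_1$ is Lie-tangent to $-S_1$ because reversing both orientations preserves Lie tangency. It is Lie-tangent to $\Omega_2,\Omega_3$ because of external tangency.
\end{itemize}
Centers do not depend on orientation, so the four lines are still $A_1A_2$, $B_1B_2$, $C_1C_2$, $O_1O_2$, and the rest of your argument goes through.

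The second part needs the same correction. Use $X'=\{\Omega_1,\Omega_2,\Omega_3,-S_2\}$. Its Apollonius sets are $\{-\Omega_1,\omega_1'\}$ (with $\omega_1'$ suitably oriented), its cyclic analogues, and $\{S_1,S_2\}$.
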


\begin{proof}
    Let the circles $\Omega_1,\, \Omega_2,\, \Omega_3,\, \omega_1,\, \omega_2,\, \omega_3,\,S_1$ be oriented in the same direction. Observe that
\begin{equation}
    \begin{split}
        Ap(\Omega_2,\,\Omega_3,\,S_1)&=\{-\Omega_1,\,-\omega_1\},\\
        Ap(\Omega_3,\,\Omega_1,\,S_1)&=\{-\Omega_2,\,-\omega_2\},\\
        Ap(\Omega_1,\,\Omega_2,\,S_1)&=\{-\Omega_3,\,-\omega_3\}.
    \end{split}
\end{equation}
Applying Proposition~\ref{prop31} to the set
\begin{equation}
    X=\{\Omega_1,\, \Omega_2,\, \Omega_3,\, S_1\},
\end{equation}
 we obtain that the lines $A_1A_2$, $B_1B_2$, $C_1C_2$, and $O_1O_2$ are concurrent. Analogously, we obtain the concurrency of the lines $A_1A_3$, $B_1B_3$, $C_1C_3$, and $O_1O_2$, which completes the proof.
\end{proof}

\begin{rem}
    By applying Theorem~\ref{main1}, we obtain further properties. For instance, in the configuration of the above corollary, the center of a circle internally tangent to the circles centered at $A_2$, $B_2$, and $C_2$ lies on the line $O_1O_2$. Likewise, the center of a circle externally tangent to the circles centered at $A_3$, $B_3$, and $C_3$ lies on the same line.
\end{rem}

    As the last corollary in this section, we leave the reader with a problem from the olympiad geometry book~\cite{Akopyan2017}. It is an immediate consequence of our result.
    
\begin{cor}
   Let $\omega_1$ and $\omega_2$ be externally tangent circles with centers $O_1$ and $O_2$, both internally tangent to the circle $\Omega$. Let $\omega_3$ and $\omega_4$ be circles with centers $O_3$ and $O_4$, externally tangent to both $\omega_1$ and $\omega_2$ and internally tangent to $\Omega$. Let $\omega_5$ and $\omega_6$ be circles with centers $O_5$ and $O_6$, externally tangent to $\omega_3$ and $\omega_1$, and to $\omega_3$ and $\omega_2$, respectively, and internally tangent to $\Omega$. Then the lines $O_1O_6$, $O_2O_5$, and $O_3O_4$ are concurrent.
\end{cor}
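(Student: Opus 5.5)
The plan is to apply Proposition~\ref{prop31} with $n=2$ to the four cycles
\begin{equation}
X=\{\Omega,\,\omega_1,\,\omega_2,\,\omega_3\}.
\end{equation}
Each of the three lines in the statement should then be the line through the centers of the two Apollonius cycles of one $3$-element subset of $X$.

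First, fix orientations so that every tangency in the configuration is an oriented (Lie) tangency. Orient $\omega_1,\dots,\omega_6$ in the positive direction and $\Omega$ in the negative direction. Then external tangency between two of the $\omega_j$, and internal tangency of an $\omega_j$ with $\Omega$, both correspond to $(\,\cdot\,|\,\cdot\,)=0$. With these orientations I read off the Apollonius cycles of the subsets directly from the hypotheses:
\begin{equation}
\begin{split}
Ap(\Omega,\,\omega_1,\,\omega_2)&=\{\omega_3,\,\omega_4\},\\
Ap(\Omega,\,\omega_1,\,\omega_3)&=\{\omega_2,\,\omega_5\},\\
Ap(\Omega,\,\omega_2,\,\omega_3)&=\{\omega_1,\,\omega_6\},\\
Ap(\omega_1,\,\omega_2,\,\omega_3)&=\{\Omega,\,\Omega^{*}\}.
\end{split}
\end{equation}
The first line holds by the definition of $\omega_3,\omega_4$. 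The second holds because $\omega_2$ touches $\omega_1$, $\omega_3$ and $\Omega$ with the correct orientations, and $\omega_5$ does so by hypothesis. The third line holds analogously. In the last line, $\Omega^{*}$ denotes the second oriented cycle tangent to $\omega_1,\omega_2,\omega_3$; since these three circles are mutually tangent, this is their other Soddy-type circle.

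The key point is that each subset has exactly two oriented Apollonius cycles, and that they are distinct. Distinctness holds because $\omega_5\neq\omega_2$ and $\omega_6\neq\omega_1$, which the construction intends, and because $\omega_3\neq\omega_4$. That there are no further solutions follows from the Lie-geometric picture. Four cycles $X_i$ in general position span a $4$-dimensional subspace of $V=\mathbb{R}^{3,2}$. The orthogonal complement of any three of them is therefore a projective line, which meets the quadric $Q$ in at most two points unless it is contained in $Q$. Hence exactly two oriented solutions exist once two distinct ones are exhibited. I would also check that $X$ is linearly independent, so that each $Y_i$ in the proof of Proposition~\ref{prop31} really is a projective line.

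With these hypotheses verified, Proposition~\ref{prop31} says that the lines through the centers of the pairs $(\omega_3,\omega_4)$, $(\omega_2,\omega_5)$, $(\omega_1,\omega_6)$ and $(\Omega,\Omega^{*})$ all pass through $P_X$. The first three of these are exactly $O_3O_4$, $O_2O_5$ and $O_1O_6$, which gives the claim. If $P_X$ turns out to be a point at infinity, the lines are parallel; this is concurrency in the projective sense, consistent with the homogeneous-center convention of Definition~\ref{eqcycles}.

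I expect the main obstacle to be the orientation bookkeeping together with the nondegeneracy checks. One must make sure that $\omega_2$, with the chosen orientation, is genuinely an oriented solution for $\{\Omega,\omega_1,\omega_3\}$ rather than only an unoriented one. One must also rule out the degenerate case in which the span of $X$ drops rank, for instance if some cycle of $X$ were itself tangent to all the others. The tangencies $\omega_1\sim\omega_2$, $\omega_3\sim\omega_1$ and $\omega_3\sim\omega_2$ make this configuration special, so I would first try to confirm independence directly from the fact that $\Omega$ is not tangent to itself: $\Omega$ is orthogonal to $\omega_1,\omega_2,\omega_3$ but $(\Omega|\Omega)=0$ only as a cycle, not as a relation forcing dependence.
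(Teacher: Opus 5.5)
Your choice of $X=\{\Omega,\omega_1,\omega_2,\omega_3\}$ and the appeal to Proposition~\ref{prop31} is the right route. The paper only calls this corollary immediate, and it follows the same pattern as the proof of Corollary~\ref{cor45soddy}. However, your orientation step is wrong, and as written it lands exactly in the degenerate case you were worried about.

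Your four displayed equalities say that $\omega_3\perp\Omega,\omega_1,\omega_2$, that $\omega_2\perp\Omega,\omega_1,\omega_3$, that $\omega_1\perp\Omega,\omega_2,\omega_3$, and that $\Omega\perp\omega_1,\omega_2,\omega_3$. In other words, the four vectors of $X$ would be pairwise orthogonal null vectors, so their span would be totally isotropic. In $\mathbb{R}^{3,2}$ a totally isotropic subspace has dimension at most $2$, and a $2$-dimensional one is a pencil of cycles touching at one common point.

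So no orientation can realize your listing: three circles pairwise externally tangent at three distinct points are never pairwise in oriented contact. If it could be realized, $X$ would be dependent, $\langle X\setminus X_i\rangle^{\perp}$ would not be a projective line, and Proposition~\ref{prop31} would not apply. Your closing remark runs the wrong way. The relations $(\Omega|\omega_j)=0$ for $j=1,2,3$, together with $(\Omega|\Omega)=0$, say precisely that $[\Omega]\in\langle X\rangle^{\perp}$, i.e.\ that $X$ contains a cycle tangent to all the others.

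The repair keeps your orientations but reads them correctly. With the paper's form and $v=1$ one has $(X|Y)=\tfrac12\bigl((r_X-r_Y)^2-|c_X-c_Y|^2\bigr)$, so oriented contact means $|c_X-c_Y|=|r_X-r_Y|$. External tangency therefore needs opposite signs of the radii, and internal tangency needs equal signs. With all $\omega_j$ positive and $\Omega$ negative, none of the tangencies inside $X$ is an oriented contact, and the Apollonius cycles are the reversed circles:
\[
\begin{split}
Ap(\Omega,\omega_1,\omega_2)&=\{-\omega_3,-\omega_4\},\qquad Ap(\Omega,\omega_1,\omega_3)=\{-\omega_2,-\omega_5\},\\
Ap(\Omega,\omega_2,\omega_3)&=\{-\omega_1,-\omega_6\},\qquad Ap(\omega_1,\omega_2,\omega_3)=\{-\Omega,\sigma\}.
\end{split}
\]
Here $\sigma$ is the negatively oriented inner Soddy circle of $\omega_1,\omega_2,\omega_3$; compare $Ap(\Omega_2,\Omega_3,S_1)=\{-\Omega_1,-\omega_1\}$ in the paper.

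Independence is then a one-line check. With signed radii $s=(-R,r_1,r_2,r_3)$, every pair in $X$ gives $(X_a|X_b)=-2s_as_b$. The Gram matrix is therefore $-2D(J-I)D$, with $D=\mathrm{diag}(s)$ and $J$ the all-ones matrix, and its determinant is $-48\prod_a s_a^2\neq 0$.

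Hence each $\langle X\setminus X_i\rangle^{\perp}$ is a projective line. It is not contained in $Q$, because its two exhibited points are not tangent to each other (for example, $\omega_3$ and $\omega_4$ are disjoint). So these two points are its only points on $Q$. Proposition~\ref{prop31} now applies, and since centers do not depend on orientation, the lines it produces are $O_3O_4$, $O_2O_5$ and $O_1O_6$.
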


% ------------------------------------------------------------------------
\end{document}

%% file: example1fig.tex
\begin{tikzpicture}[scale = 0.4, line cap=round,line join=round,>=triangle 45,x=1cm,y=1cm]
\clip(-10.96660714258106,-6.5046031532462335) rectangle (2.4200378170656247,7.765828049576537);
\draw [line width=0.4pt] (-2.349211096892333,1.7622845523406627) circle (2cm);
\draw [line width=0.4pt] (-2.1583953691302273,-5.781514222855904) circle (5.546211672264367cm);
\draw [line width=0.4pt] (-2.2149196377744165,-0.4124009980761101) circle (4.178827996695348cm);
\draw [line width=0.4pt] (0.7166391085960795,3.7422228482941877) circle (5.6496017780405685cm);
\draw [line width=0.4pt] (-5.643453491159806,-2.512443716866337) circle (7.3967892980685725cm);
\draw [line width=0.4pt] (-3.997069049512911,1.186021920119912) circle (3.7457131646719386cm);
\draw [line width=0.4pt] (-10.406109603710767,2.417158997189638) circle (6.083469186413062cm);
\draw [line width=0.4pt] (-4.825483767343133,4.838979101264928)-- (-6.12,-1.9);
\draw [line width=0.4pt] (-6.12,-1.9)-- (1.7206136462119783,-1.8174565829552756);
\draw [line width=0.4pt] (1.7206136462119783,-1.8174565829552756)-- (-4.825483767343133,4.838979101264928);
\draw [line width=0.4pt,dash pattern=on 1pt off 2pt] (-10.406109603710767,2.417158997189638)-- (-2.228170065615906,0.846227358632837);
\draw [line width=0.4pt,dash pattern=on 1pt off 2pt] (-2.228170065615906,0.846227358632837)-- (-2.1583953691302273,-5.781514222855904);
\draw [line width=0.4pt,dash pattern=on 1pt off 2pt] (-5.643453491159806,-2.512443716866337)-- (0.7166391085960795,3.7422228482941877);
\begin{scriptsize}
\draw [fill=black] (-6.12,-1.9) circle (2.5pt);
\draw[color=black] (-6.448614468700304,-1.3607108456714399) node {$B$};
\draw [fill=black] (1.7206136462119783,-1.8174565829552756) circle (2pt);
\draw[color=black] (1.8511054062806407,-1.4611106828687899) node {$C$};
\draw [fill=black] (-4.825483767343133,4.838979101264928) circle (2pt);
\draw[color=black] (-4.741817236345352,5.349344940351461) node {$A$};
\draw [fill=black] (-10.406109603710767,2.417158997189638) circle (2pt);
\draw [fill=black] (-2.1583953691302273,-5.781514222855904) circle (2pt);
\draw [fill=black] (0.7166391085960795,3.7422228482941877) circle (2pt);
\draw [fill=black] (-3.997069049512911,1.186021920119912) circle (2pt);
\draw [fill=black] (-5.643453491159806,-2.512443716866337) circle (2pt);
\draw [fill=black] (-2.2149196377744165,-0.4124009980761101) circle (2pt);
\draw [fill=black] (-2.228170065615906,0.846227358632837) circle (2pt);
\draw[color=black] (-2.4158876746067404,1.350084758657014) node {$O$};
\draw[color=black] (-3.4158876746067404,2.650084758657014) node {$\omega$};
\end{scriptsize}
\end{tikzpicture}

%% file: example2fig.tex
\definecolor{wewdxt}{rgb}{0.43137254901960786,0.42745098039215684,0.45098039215686275}
\definecolor{rcrcrf}{rgb}{0.10980392156862745,0.10980392156862745,0.12156862745098039}
\begin{tikzpicture}[scale=0.25, line cap=round,line join=round,>=triangle 45,x=1cm,y=1cm]
\clip(-52.23215160353097,-19.35674923956194) rectangle (-30.24107601972917,4.23248922953594);
\draw [line width=0.4pt,color=rcrcrf] (-42.9284964307375,-6.265682800797273) circle (1cm);
\draw [line width=0.4pt,color=rcrcrf] (-40.91467636666798,-14.144469846619952) circle (7.13208193292901cm);
\draw [line width=0.4pt,color=rcrcrf] (-43.24502347166313,-3.9925802140379574) circle (1.2950348009493495cm);
\draw [line width=0.4pt,color=rcrcrf] (-40.564458827448,-6.622106582571969) circle (1.3907554667889592cm);
\draw [line width=0.4pt,color=rcrcrf] (-47.45618698977164,-3.908175023204252) circle (4.104686544713035cm);
\draw [line width=0.4pt,color=rcrcrf] (-44.75662169766572,-7.0310048035912684) circle (0.9818576537028604cm);
\draw [line width=0.4pt,color=rcrcrf] (-37.33514334772426,-1.9451421762609726) circle (6.067720283056533cm);
\draw [line width=0.4pt,color=rcrcrf] (-52.77711649361783,-8.012862458452464)-- (-26.82767272461651,-8.012862458452464);
\draw [line width=0.4pt,color=rcrcrf] (-47.68137907026714,1.9131449008150199)-- (-28.959209023380556,-15.538592178604109);
\draw [line width=0.4pt,color=rcrcrf] (-41.35308489797593,4.484903598377278)-- (-49.62437428963426,-16.88941849293763);
\draw [line width=0.4pt] (-50.86945743731461,-6.683511508918704)-- (-50.934810160995305,-6.087081277032999);
\draw [line width=0.4pt] (-50.934810160995305,-6.087081277032999)-- (-50.38561006675689,-6.32869927406186);
\draw [line width=0.4pt] (-46.883079495416624,-17.463735710108832)-- (-47.38763122427309,-17.139039616957387);
\draw [line width=0.4pt] (-47.38763122427309,-17.139039616957387)-- (-47.41655042502357,-17.73834227824617);
\draw [line width=0.4pt] (-40.787413640788074,3.40120889546661)-- (-40.18748340491276,3.4103583261732835);
\draw [line width=0.4pt] (-40.18748340491276,3.4103583261732835)-- (-40.479524883428276,2.886228786053536);
\draw [line width=0.4pt] (-50.42922915481427,-7.712862458452394)-- (-50.948844397085054,-8.012862458452464);
\draw [line width=0.4pt] (-50.948844397085054,-8.012862458452464)-- (-50.42922915481427,-8.31286245845253);
\draw [line width=0.4pt] (-42.47983882912396,2.404462290038277)-- (-42.01253081127489,2.7807908917334396);
\draw [line width=0.4pt] (-42.01253081127489,2.7807908917334396)-- (-41.92027469096074,2.187925976036413);
\draw [line width=0.4pt] (-33.04784872570329,-11.317273796061304)-- (-32.8723114465835,-11.891021705547107);
\draw [line width=0.4pt] (-32.8723114465835,-11.891021705547107)-- (-33.45696035112632,-11.756167493833143);
\draw [line width=0.4pt] (-42.155913791129954,-4.016910735202637)-- (-42.01258973469601,-4.390351781215864);
\draw [line width=0.4pt] (-42.01258973469601,-4.390351781215864)-- (-41.760842330249694,-4.079508984364019);
\draw [line width=0.4pt] (-43.441763852838086,-6.870723402707389)-- (-43.23980113532254,-7.2159929506447975);
\draw [line width=0.4pt] (-43.23980113532254,-7.2159929506447975)-- (-43.63979469374728,-7.2182630206619);
\draw [line width=0.4pt] (-45.01208507390414,-5.88875799377018)-- (-44.64832977952581,-6.055137336872317);
\draw [line width=0.4pt] (-44.64832977952581,-6.055137336872317)-- (-44.97429616450639,-6.286968991013967);
\draw [line width=0.4pt] (-41.23198480165472,-5.186639535359252)-- (-40.83851468429872,-5.258620551939755);
\draw [line width=0.4pt] (-40.83851468429872,-5.258620551939755)-- (-41.097587131925664,-5.563385160909842);
\draw [line width=0.4pt,dash pattern=on 1pt off 2pt] (-43.762045976835054,-1.740233446991293)-- (-40.91467636666798,-14.144469846619952);
\draw [line width=0.4pt,dash pattern=on 1pt off 2pt] (-46.18938517284112,-8.012862458452458)-- (-37.33514334772426,-1.9451421762609726);
\draw [line width=0.4pt,dash pattern=on 1pt off 2pt] (-47.45618698977164,-3.908175023204252)-- (-37.032788799788534,-8.012862458452458);
\begin{scriptsize}
\draw [fill=rcrcrf] (-43.762045976835054,-1.740233446991293) circle (2pt);
\draw[color=rcrcrf] (-43.37733798350458,-0.7073163849855991) node {$A$};
\draw [fill=rcrcrf] (-46.18938517284112,-8.012862458452458) circle (2pt);
\draw[color=rcrcrf] (-45.809979087907436,-6.98353043434568) node {$B$};
\draw [fill=rcrcrf] (-37.032788799788534,-8.012862458452458) circle (2pt);
\draw[color=rcrcrf] (-36.66324853535271,-6.98353043434568) node {$C$};
\draw [fill=black] (-40.91467636666798,-14.144469846619952) circle (2pt);
\draw [fill=black] (-47.45618698977164,-3.908175023204252) circle (2pt);
\draw [fill=black] (-37.33514334772426,-1.9451421762609726) circle (2pt);
\draw [fill=black] (-42.84782609948943,-5.722926610303264) circle (2pt);
\draw[color=black] (-42.501587185919554,-4.794153440382861) node {$I$};
\draw [fill=black] (-44.75662169766572,-7.0310048035912684) circle (2pt);
\draw [fill=black] (-43.24502347166313,-3.9925802140379574) circle (2pt);
\draw [fill=black] (-40.564458827448,-6.622106582571969) circle (2pt);
\end{scriptsize}
\end{tikzpicture}

%% file: corollary49fig2.tex
\definecolor{wewdxt}{rgb}{0.43137254901960786,0.42745098039215684,0.45098039215686275}
\begin{tikzpicture}[scale=0.5, transform shape,line cap=round,line join=round,>=triangle 45,x=1cm,y=1cm]
\clip(-9.091803373987176,-6.276577226578134) rectangle (2.7885129013857997,5.797741103058356);
\draw [line width=0.4pt,color=wewdxt] (-3.165727382224656,-0.25932447791768926) circle (5.559492893345928cm);
\draw [line width=0.4pt,color=wewdxt] (-4.616925493038765,-2.2951709952333803) circle (3.0593634969152443cm);
\draw [line width=0.4pt,color=wewdxt] (-2.9430848809800483,0.947823769962258) circle (4.331984614058134cm);
\draw [line width=0.4pt,color=wewdxt] (-5.995915089567697,-0.6256133924734061) circle (2.705700674609243cm);
\draw [line width=0.4pt,color=wewdxt] (-4.940475892414381,-0.890186688699903) circle (1.6176056245450336cm);
\draw [line width=0.4pt,color=wewdxt] (-6.961551841554828,3.8026550770671337)-- (-7.82,-3.3);
\draw [line width=0.4pt,color=wewdxt] (-7.82,-3.3)-- (1.38,-3.46);
\draw [line width=0.4pt,color=wewdxt] (1.38,-3.46)-- (-6.961551841554828,3.8026550770671337);
\draw [line width=0.4pt,dash pattern=on 1pt off 2pt] (-8.404111842746705,-2.1213903937555236)-- (2.072657078297394,1.6027414379201455);
\draw [line width=0.4pt] (-6.224305741171919,4.560671170910972)-- (-5.924382412942908,4.567453293293325);
\draw [line width=0.4pt] (-5.924382412942908,4.567453293293325)-- (-6.06847058678272,4.304321010668247);
\draw [line width=0.4pt] (-1.6385794131290714,-1.1044054587604692)-- (-1.691549904918479,-1.399691975553207);
\draw [line width=0.4pt] (-1.691549904918479,-1.399691975553207)-- (-1.9207902839613062,-1.2061749256162562);
\draw [line width=0.4pt] (-6.6644444158603795,-0.5162495735049628)-- (-6.449345805870232,-0.30712619370177563);
\draw [line width=0.4pt] (-6.449345805870232,-0.30712619370177563)-- (-6.375788951430484,-0.5979687441735583);
\draw [line width=0.4pt] (-4.780057549790742,-2.8715213204799546)-- (-4.9479658249521945,-3.1201310771917847);
\draw [line width=0.4pt] (-4.9479658249521945,-3.1201310771917847)-- (-4.648709322430355,-3.1412390306313154);
\draw [line width=0.4pt] (-6.983924851963651,2.883947939768677)-- (-6.732308948714638,3.047316961410454);
\draw [line width=0.4pt] (-6.732308948714638,3.047316961410454)-- (-6.716635177405956,2.7477266863797527);
\draw [line width=0.4pt] (-7.3173187630953755,-0.39100687134824025)-- (-7.435060774402942,-0.11507786096105876);
\draw [line width=0.4pt] (-7.435060774402942,-0.11507786096105876)-- (-7.615151301385558,-0.3550099390396765);
\draw [line width=0.4pt] (-3.618184424486212,1.0906061274204144)-- (-3.520735721492666,0.8068742993064841);
\draw [line width=0.4pt] (-3.520735721492666,0.8068742993064841)-- (-3.8151790439983024,0.8643471610051935);
\draw [line width=0.4pt] (-4.4911933877523955,-3.357892288908654)-- (-4.22881674685214,-3.2124326783292907);
\draw [line width=0.4pt] (-4.4911933877523955,-3.357892288908654)-- (-4.234033349315948,-3.512387319998221);
\begin{scriptsize}
\draw [fill=black] (-6.961551841554828,3.8026550770671337) circle (2pt);
\draw[color=black] (-6.883875232607593,4.000912552856139) node {$A$};
\draw [fill=black] (-7.82,-3.3) circle (2pt);
\draw[color=black] (-7.743027187202995,-3.103257910411237) node {$B$};
\draw [fill=black] (1.38,-3.46) circle (2pt);
\draw[color=black] (1.4582131007219525,-3.260307192434053) node {$C$};
\draw [fill=black] (-5.131091906063081,-0.9579441228896188) circle (2pt);
\draw[color=black] (-5.0547130066948025,-0.756756873129165) node {$I$};
\draw [fill=black] (-3.165727382224656,-0.25932447791768926) circle (2pt);
\draw[color=black] (-3.0962160779396934,-0.0638923936167421) node {$O$};
\draw[color=wewdxt] (-6.36653642123832,0.5088755761135275) node {$\omega_a$};
\draw[color=wewdxt] (-5.0362366205744715,4.555204136466077) node {$\omega_b$};
\draw[color=wewdxt] (-7.789218152503824,1.6821260947545635) node {$\omega_c$};
\draw [fill=black] (-4.940475892414381,-0.890186688699903) circle (2pt);
\draw[color=black] (-4.86994914549149,-0.6920895217080054) node {$S$};
\end{scriptsize}
\end{tikzpicture}

%% file: corollary43.tex
\definecolor{wewdxt}{rgb}{0.43137254901960786,0.42745098039215684,0.45098039215686275}
\begin{tikzpicture}[scale = 0.5, line cap=round,line join=round,>=triangle 45,x=1cm,y=1cm]
\clip(-8.2758153638788,-7.22053175582545) rectangle (3.7509818358249856,5.376801476007001);
\draw [line width=0.4pt] (-4.45659273715972,1.7731052774903153) circle (2cm);
\draw [line width=0.4pt] (-5.193482396906823,-2.492131357940168) circle (2.3284234920881466cm);
\draw [line width=0.4pt] (0.008781995960095629,-0.7857049628367688) circle (3.146560147631388cm);
\draw [line width=0.4pt] (-3.4897323304900767,-0.3947874887390186) circle (0.37372662533641515cm);
\draw [line width=0.4pt] (-2.349533919721081,-1.045527115741685) circle (5.519145553743315cm);
\draw [line width=0.4pt] (-2.1760108997133054,-4.9790443959366755) circle (1.5818027401105843cm);
\draw [line width=0.4pt] (-6.845210343447558,0.22147205223574362) circle (0.8483435244086578cm);
\draw [line width=0.4pt] (-1.6226408549317501,3.22706198193958) circle (1.1851645748997688cm);
\draw [line width=0.4pt] (-3.780404823840588,-0.22889347700530357) circle (1.876005426460205cm);
\draw [line width=0.4pt] (-7.799658768990033,-1.9156459414118778)-- (-2.1036302650186,4.468137639755203);
\draw [line width=0.4pt] (-3.2722958196382863,-6.486986306020134)-- (-6.796317819118778,2.223582945951254);
\draw [line width=0.4pt] (-3.1075227507638,4.421320291623834)-- (-0.2193375202684087,-6.137013238983451);
\draw [line width=0.4pt] (-4.209787866702343,-6.241720231489525)-- (-0.04013069297161609,3.967219075687097);
\draw [line width=0.4pt] (1.4160099451601242,-5.080579413427568)-- (-7.808194062136377,-0.230666947975938);
\draw [line width=0.4pt] (-7.491424089359551,0.959948669896197)-- (1.6646618353076659,2.7422426876800845);
\draw [line width=0.4pt,dash pattern=on 1pt off 2pt] (-7.142946496263594,1.690192810429242)-- (2.4438786568214326,-3.781247041912613);
\begin{scriptsize}
\draw [fill=black] (-3.4897323304900767,-0.3947874887390186) circle (2pt);
\draw[color=black] (-3.2697373416670725,0.09563039306064772) node {$O_1$};
\draw [fill=black] (-2.4897323304900767,-0.937874887390186) circle (2pt);
\draw[color=black] (-2.1301423447408254,-0.5555667480400703) node {$O_2$};
\draw [fill=black] (-3.780404823840588,-0.22889347700530357) circle (2pt);
\draw[color=black] (-4.05685822876826,0.21772985701703235) node {$S$};
\end{scriptsize}
\end{tikzpicture}

%% file: corollary43fig2.tex
\definecolor{wrwrwr}{rgb}{0.3803921568627451,0.3803921568627451,0.3803921568627451}
\begin{tikzpicture}[scale=0.4, line cap=round,line join=round,>=triangle 45,x=1cm,y=1cm]
\clip(-8.387192819373258,-6.6236933481894615) rectangle (6.691821370290801,9.718157021539078);
\draw [line width=0.4pt] (-8.671004457652302,6.998258543833584) circle (6.096290406295471cm);
\draw [line width=0.4pt] (2.847186969397828,0.3030136229022705) circle (3.392602208410282cm);
\draw [line width=0.4pt] (-2.69,-7.008888774442345) circle (6.008888774442334cm);
\draw [line width=0.4pt] (-0.4469419887122985,-1.1924367564334772) circle (0.22508454277688786cm);
\draw [line width=0.8pt] (-0.9969059372239327,-0.09773506542886885) circle (1cm);
\draw [line width=0.4pt] (-11.982581777322896,13.306027083003876) circle (16.33049085846967cm);
\draw [line width=0.4pt] (26.150620605710056,13.365856698711093) circle (29.302747409941624cm);
\draw [line width=0.4pt] (-9.181688244924084,-15.79740195379269) circle (16.705089715390653cm);
\draw [line width=0.4pt] (-3.7096127764364613,0.7869168558288406) circle (1.8533116579328281cm);
\draw [line width=0.4pt] (-1.9092844173249084,2.68198324263287) circle (1.9256227308258642cm);
\draw [line width=0.4pt,domain=-8.387192819373258:6.691821370290801] plot(\x,{(--23.719962750999315-14.935514813717905*\x)/7.482674632006093});
\draw [line width=0.4pt,domain=-8.387192819373258:6.691821370290801] plot(\x,{(--85.28457844003367--5.829977024915788*\x)/15.654756155501255});
\draw [line width=0.4pt,domain=-8.387192819373258:6.691821370290801] plot(\x,{(--6.190335904581959--1.0646228425383186*\x)/-1.51701756897386});
\draw [line width=0.4pt,domain=-8.387192819373258:6.691821370290801] plot(\x,{(--10.454869529810681--1.8292026130793153*\x)/0.2979629200647895});
\draw [line width=0.4pt,domain=-8.387192819373258:6.691821370290801] plot(\x,{(--21.78870932376293-2.520897835100749*\x)/-16.13474529656231});
\draw [line width=0.4pt,domain=-8.387192819373258:6.691821370290801] plot(\x,{(--4.15433114947465-15.155697112673376*\x)/-6.081922122908548});
\draw [line width=0.4pt] (-7.664501903777584,-0.8031211540057238) circle (13.962329598987795cm);
\draw [line width=0.4pt] (-2.689999999998315,1.2199999999981066) circle (2.219999999998117cm);
\draw [line width=0.4pt,color=wrwrwr] (-2.635948148932724,0.46884145706002456) circle (2.973100762998978cm);
\draw [line width=0.4pt,dash pattern=on 1pt off 2pt,domain=-8.387192819373258:6.691821370290801] plot(\x,{(--11.511088226868184--2.0231211540038303*\x)/4.974501903779268});
\draw [line width=1.2pt,dash pattern=on 1pt off 2pt] (-2.4455030268158557,1.3194364879459985) circle (3.015362165438337cm);
\begin{scriptsize}
\draw[color=wrwrwr] (-8.299669519764926,9.899455285013495) node {$C_3$};
\draw [fill=black] (-2.4455030268158557,1.3194364879459985) circle (2pt);
\draw[color=black] (-2.4606151030376324,1.6972717788605332) node {$S$};
\draw [fill=black] (-7.664501903777584,-0.8031211540057238) circle (2pt);
\draw[color=black] (-7.5244631518054135,-0.4908107113479553) node {$O_2$};
\draw [fill=black] (-2.689999999998315,1.2199999999981066) circle (2pt);
\draw[color=black] (-3.035768214749529,1.509721851128377) node {$O_1$};
\end{scriptsize}
\end{tikzpicture}

%% file: col46fig.tex
\definecolor{wewdxt}{rgb}{0.43137254901960786,0.42745098039215684,0.45098039215686275}
\begin{tikzpicture}[scale = 1, transform shape, line cap=round,line join=round,>=triangle 45,x=1cm,y=1cm]
\clip(-8.414435768545733,-6.592389603788865) rectangle (5.890646608783881,4.51142185132676);
\draw [line width=0.4pt] (-4.45659273715972,1.7731052774903153) circle (2cm);
\draw [line width=0.4pt] (-5.193482396906823,-2.492131357940168) circle (2.3284234920881466cm);
\draw [line width=0.4pt] (0.008781995960095629,-0.7857049628367688) circle (3.146560147631388cm);
\draw [line width=0.4pt] (-3.4897323304900767,-0.3947874887390186) circle (0.37372662533641515cm);
\draw [line width=0.4pt] (-3.186488527237422,0.035271930572883525) circle (0.1524937736885773cm);
\draw [line width=0.4pt] (-2.349533919721081,-1.045527115741685) circle (5.519145553743315cm);
\draw [line width=0.4pt] (-3.29375268090103,-0.8886259657039566) circle (0.15757786775967478cm);
\draw [line width=0.4pt] (-4.003635080100796,-0.32360805557641553) circle (0.14508215230985103cm);
\draw [line width=0.4pt,dash pattern=on 1pt off 1.5pt] (-7.142946496263594,1.690192810429242)-- (2.4438786568214326,-3.781247041912613);
\draw [line width=0.4pt] (-2.1760108997133054,-4.9790443959366755) circle (1.5818027401105843cm);
\draw [line width=0.4pt] (-6.845210343447558,0.22147205223574362) circle (0.8483435244086578cm);
\draw [line width=0.4pt] (-1.6226408549317501,3.22706198193958) circle (1.1851645748997688cm);
\draw [line width=0.4pt,dash pattern=on 1pt off 2pt] (-4.45659273715972,1.7731052774903153)-- (-2.1760108997133054,-4.9790443959366755);
\draw [line width=0.4pt,dash pattern=on 1pt off 2pt] (-5.193482396906823,-2.492131357940168)-- (-1.6226408549317501,3.22706198193958);
\draw [line width=0.4pt,dash pattern=on 1pt off 2pt] (0.008781995960095629,-0.7857049628367688)-- (-6.845210343447558,0.22147205223574362);
\draw [line width=0.4pt,dash pattern=on 1pt off 2pt] (-4.45659273715972,1.7731052774903153)-- (-3.29375268090103,-0.8886259657039566);
\draw [line width=0.4pt,dash pattern=on 1pt off 2pt] (-4.003635080100796,-0.32360805557641553)-- (0.008781995960095629,-0.7857049628367688);
\draw [line width=0.4pt,dash pattern=on 1pt off 2pt] (-5.193482396906823,-2.492131357940168)-- (-3.186488527237422,0.035271930572883525);
\begin{scriptsize}
\draw [fill=black] (-4.45659273715972,1.7731052774903153) circle (2pt);
\draw[color=black] (-4.2963968373017325,2.121890020226862) node {$A_1$};
\draw [fill=black] (-5.193482396906823,-2.492131357940168) circle (2pt);
\draw[color=black] (-5.037536945362169,-2.136032561375289) node {$B_1$};
\draw [fill=black] (0.008781995960095629,-0.7857049628367688) circle (2pt);
\draw[color=black] (0.16497597004246642,-0.4357699605307441) node {$C_1$};
\draw [fill=black] (-3.4897323304900767,-0.3947874887390186) circle (2pt);
\draw[color=black] (-3.3227421855360606,-0.0434016680281569) node {$O_1$};
\draw [fill=black] (-3.186488527237422,0.035271930572883525) circle (2pt);
\draw[color=black] (-3.0320990059045165,0.39256310141916234) node {$B_2$};
\draw [fill=black] (-4.003635080100796,-0.32360805557641553) circle (2pt);
\draw[color=black] (-4.209203883412269,0.17458071669550274) node {$C_2$};
\draw [fill=black] (-3.29375268090103,-0.8886259657039566) circle (2pt);
\draw[color=black] (-3.133824118775557,-0.5374950734017853) node {$A_2$};
\draw [fill=black] (-2.349533919721081,-1.045527115741685) circle (2pt);
\draw[color=black] (-2.189233784973039,-0.6828166632175584) node {$O_2$};
\draw [fill=black] (-6.845210343447558,0.22147205223574362) circle (2pt);
\draw[color=black] (-6.679670910280393,0.5814811681796673) node {$C_3$};
\draw [fill=black] (-2.1760108997133054,-4.9790443959366755) circle (2pt);
\draw[color=black] (-2.014847877194113,-4.621031747225009) node {$A_3$};
\draw [fill=black] (-1.6226408549317501,3.22706198193958) circle (2pt);
\draw[color=black] (-1.4626258358941795,3.5896380773661702) node {$B_3$};
\draw [fill=black] (-3.7804048316818695,-0.22889345378951573) circle (2pt);
\draw [fill=black] (-3.5160627493816867,-0.37976006298002823) circle (2pt);
\end{scriptsize}
\end{tikzpicture}